\newtheorem{theorem}{Theorem}[section]
\newtheorem{proposition}[theorem]{Proposition}
\newtheorem{lemma}[theorem]{Lemma}
\newtheorem{corollary}[theorem]{Corollary}
\newtheorem{problem}[theorem]{Problem}
\newtheorem{remark}[theorem]{Remark}
\newenvironment{proof}{{\noindent \sc Proof. }}{\hfill $\Qed$\\}
\newcommand{\Qed}{\rule{2.5mm}{3mm}}
\newcommand{\Aut}{\hbox{{\rm Aut}}\,}
\def\Z{{\mathbb Z}}
\newcommand{\ZZ}{\mathbb{Z}}
\newcounter{case}
\renewcommand{\thecase}{\arabic{case}}
\newcounter{subcase}
\numberwithin{subcase}{case}
\begin{document}

%%%%%%%%%%%%%%%% title page %%%%%%%%%%%%%%%%%%%%%%%%%%%

\title
{On cubic vertex-transitive graphs of given girth
}
\author{Ted Dobson and Ademir Hujdurovi\'{c}\\ University of Primorska, Koper, Slovenia\and Wilfried Imrich and Ronald Ortner \\ Montanuniversit\"at Leoben, A-8700 Leoben, Austria
}
\date{}
\maketitle
\begin{abstract}
A set of vertices of a graph is distinguishing if the only automorphism that preserves it is the identity. The minimal size of such sets, if they exist, is the distinguishing cost. The distinguishing costs of
vertex transitive cubic graphs are well known if they are 1-arc-transitive, or if they have two edge orbits and  either have girth 3 or  vertex-stabilizers of order 1 or 2. 

There are  many results about vertex-transitive cubic graphs of girth 4 with two edge orbits, but for larger girth almost nothing is known  about 
%the existence or 
the distinguishing costs of such graphs.
We prove that cubic vertex-transitive graphs of girth 5 with two edge orbits  have distinguishing cost 2, and prove  the non-existence of infinite 3-arc-transitive cubic graphs of girth 6.
\end{abstract}

\noindent
Keywords: {Distinguishing  number, distinguishing  cost, vertex-transitive cubic graphs, automorphisms, infinite graphs.}\\
Math. Subj. Class.: {05C15, 05C10, 05C25, 05C63,  03E10.}                       % Math. Subj. Class. codes

%%%%%%%%%%%%%%%%%%%%%%%%%%%%%%%%%%%%%%%%%%%%%%%%%%%%
%%%%%%%%%%%%%%%%%%%%%%%%%%%%%%%%%%%%%%%%%%%%%%%%%%%%

\section{Introduction}

Cubic vertex-transitive graphs have been one of the most studied objects in algebraic graph theory, the first results dating back to the classic works of Foster \cite{Foster1932, Foster1988} and Tutte \cite{Tutte1947}. 
This topic has remained relevant over time, as evidenced by the works of  Coxeter, Frucht, and Powers \cite{Coxeter1981}, Djokovic and Miller \cite{DjokovicM1980}, Goldschmidt \cite{Goldschmidt1980},  Conder and Lorimer \cite{ConderL1989}, Maru\v si\v c and Scapellato \cite{MarusicS1998},  Glover and Maru\v si\v c \cite{GloverM2007}, Poto\v cnik, Spiga, and Verret \cite{PotocnikSV2015a}, and many others (see also \cite{BarbieriGS2023,  CaoKZ2023, PotocnikT2023} for some very recent works on this topic). 

Meanwhile, the {\em girth} of a graph (shortest cycle length), is a significant characteristic of a graph that appears in numerous well-known problems, results, and formulas. In the class of graphs of bounded girth, one sees that the structure is heavily restricted.

The structure of finite cubic arc-transitive graphs of girth at most 7 and 9 was studied in \cite{FengN2006} and \cite{ConderN2007}, respectively, and those of girth 6 were completely determined in \cite{KutnarM2009a}.
Finite cubic vertex-transitive graphs of girth 3, 4, and 5 were classified by Eiben, Jajcay, and Šparl \cite{EibenJS2019}, while the classification of finite cubic vertex-transitive graphs of girth 6 was done by Potočnik and Vidali \cite{PotocnikV2022}.

Following
Albertson and Collins \cite{AlbertsonC1996}, a vertex coloring of a graph $\Gamma$ is said to be {\em distinguishing} if
the identity is the only automorphism of $\Gamma$ that preserves the coloring. The smallest number of colors of a distinguishing coloring is the {\em distinguishing number} $D(\Gamma)$ of $\Gamma$. Observe that $D(\Gamma)=1$ if and only if $\Gamma$ is asymmetric (has trivial automorphism group), and otherwise $D(\Gamma)\geq 2$.
When $D(\Gamma) = 2$ each of the two colors induces a set of vertices which
is preserved only by the identity automorphism. We call such sets {\em distinguishing}, but the term asymmetrizing was also used by Babai \cite{Babai1977}.
It was proved by H\"uning et al, \cite{HuningIKST2019} that the only cubic vertex-transitive graphs with $D(\Gamma)> 2$ are $K_{3,3}$, $K_4$, the cube, and the Petersen graph.

The cardinality of a smallest distinguishing set of a graph $\Gamma$ is the {\em 2-distinguishing cost}.
It was introduced by Boutin \cite{Boutin2008} in 2008 and denoted $ \rho(\Gamma)$. Clearly $0 < \rho(\Gamma) \leq |V (\Gamma)|/2$.
Although we cannot talk of the $2$-distinguishing cost unless we already know that $D(\Gamma) = 2$, when it is
clear from the context, we refer to $\rho(\Gamma)$ as the distinguishing cost, or simply as the cost,
without adding that $D(\Gamma) = 2$.

Cubic vertex-transitive graphs can naturally be divided into three categories, depending on the number of edge orbits. Those with one edge orbit are edge-transitive and also arc-transitive, those with three edge orbits have trivial vertex-stabilizers, and are the so-called graphical regular representations (GRRs), and the remaining category are the cubic vertex-transitive graphs with two edge-orbits, which are further divided into the class of \emph{rigid graphs}, whose vertex-stabilizers have order 2, and into the class of \emph{flexible graphs}, which have vertex-stabilizers of order at least 4 (see \cite[Corollary 7.2]{ImrichLTW2022}).

It is easy to observe that GRRs have distinguishing cost equal to 1.
In \cite{ImrichLTW2022} it was proved that cubic arc-transitive graphs (different from $K_{3,3}$, $K_4$, the cube, and the Petersen graph, which are not 2-distinguishable) have cost at most 5, unless $\Gamma$ is the infinite $3$-valent tree, which has infinite cost. For cubic vertex-transitive graphs with two edge-orbits, the situation is more complex. 
In \cite{ImrichLTW2022} it was proved that a cubic vertex-transitive graph which is rigid has distinguishing cost 2, unless it is an infinite ladder, a $k$-ladder, or a $k$-M\" obius ladder
with $k > 3$ (which have distinguishing cost equal to 3). The situation for the flexible cubic vertex-transitive graphs heavily depends on the girth. In  \cite{ImrichLTW2022} it was proved that a flexible cubic vertex-transitive graph of girth 3 has distinguishing cost equal to 2, unless it is a truncation of  one of $K_{3,3}$, the Heawood graph or the Tutte-Coxeter graph, in which case it has distinguishing cost equal to 3. For girth 4, a family of cubic vertex-transitive flexible graphs of girth 4 with arbitrarily large distinguishing cost is constructed.

This paper is  concerned with the existence of finite and infinite vertex-transitive cubic graphs of certain girths and their distinguishing costs. It continues the investigations of   \cite{ImrichLTW2022} and also solves several open problems posed there. We prove that cubic vertex-transitive graphs of girth 5 with two edge orbits have distinguishing cost equal to 2. In Theorem~\ref{thm:girth 4 and 5} we classify cubic graphs of girth $g\leq 5$ admitting a consistent cycle of length $g$ and such that every edge of $\Gamma$ is contained in a $g$-cycle. For $g=4$ the only example is $K_{3,3}$ and for $g=5$ the only examples are the Petersen graph and the dodecahedron. In \cite{ImrichLTW2022}, the distinguishing cost of cubic arc-transitive graphs was considered in Section 6.  It is known that all such graphs are $s$-arc regular for $s\le 5$ or is the infinite trivalent tree.  They considered the distinguishing cost for the various values of $s$, and observed that all known examples are finite.  This naturally leads to \cite[Question 6.9]{ImrichLTW2022}, which asks if there are any infinite $3$-arc-transitive graphs of valency $3$ and girth $6$.  We show there are no such graphs in Corollary \ref{final result}, and give the application of this result to distinguishing cost in Corollary \ref{distinguishing application}.  Along the way, we also obtain a new proof of Conder and Nedela's result that the only finite connected cubic $3$-arc-transitive graphs are the Heawood graph, the Pappus graph, and the Desargues graph

\section{Preliminaries}\label{sec:prelim}

Let $X$ be a graph.
A subgroup $G \leq \Aut (X)$ is said to be {\em vertex-transitive}, {\em edge-transitive},
 and {\em arc-transitive} provided it acts transitively on the sets  of vertices, edges  and arcs
of $X$, respectively. In this case the graph $X$ is said to be $G$-{\em vertex-transitive}, $G$-{\em edge-transitive},
 and $G$-{\em arc-transitive}, respectively. In the case where $G = \Aut (X)$, the symbol $G$ is omitted.
An arc-transitive graph is also called {\em symmetric}.
For a positive integer $s$, an {\em $s$-arc} of $X$ is defined as a sequence of vertices $x_0x_1\ldots x_s)$ of $X$ such that $x_i$ is adjacent to $x_{i+1}$ ($i\in \{0,\ldots,s-1\}$) and $x_i\neq x_{i+2}$.
A subgroup $G\leq \Aut (X)$ is said to be {\em $s$-arc-transitive} if it acts transitively on the set
of $s$-arcs of $X$, and it is said to be {\em $s$-regular} if it is $s$-arc-transitive
and the stabilizer of an  $s$-arc in $G$ is trivial.
A graph $X$ is said to be {\em $(G,s)$-arc-transitive} and {\em $(G, s)$-regular}
if $G$ is transitive and regular on the set of $s$-arcs of $X$, respectively.
A $(G, s)$-arc-transitive graph is said to be $(G, s)$-transitive if the graph is not $(G, s + 1)$-arc-transitive.
By Weiss~\cite{Weiss1974b,Weiss1974}, for a pentavalent  $(G, s)$-transitive graph, $s \geq 1$,
the order of the vertex stabilizer $G_v$ in $G$  is a divisor
of $2^{17} \cdot 3^2 \cdot 5$. In addition,
the complete classification of vertex-stabilizers can be deduced from his work, as was recently observed by Guo and Feng~\cite[Theorem 1.1]{GuoF2012}.

\begin{proposition}
\label{pro:feng}
{\rm \cite[Theorem 1.1.]{GuoF2012}}
Let $X$ be a connected pentavalent $(G, s)$-transitive graph for some $G \le Aut(X)$ and $s \ge 1$.
Let $v \in V(X)$. Then $s \le 5$ and one of the following holds:
\begin{enumerate}[(i)]
\itemsep=0pt
\item For $s = 1$, $G_v\cong \ZZ_5, D_{10}$ or $D_{20}$;
\item For $s = 2$, $G_v\cong F_{20}$, $F_{20} \times \ZZ_2$, $A_5$ or $S_5$;
\item For $s = 3$, $G_v\cong F_{20} \times \ZZ_4$, $A_4 \times A_5$, $S_4 \times S_5$ or
	$(A_4 \times A_5) \rtimes \ZZ_2$ with $A_4 \rtimes \ZZ_2 = S_4$ and $A_5 \rtimes \ZZ_2 = S_5$.
%\item For $s = 4$, $G_v\cong \matheorem{ASL}(2, 4)$, $\matheorem{AGL}(2, 4)$, $\matheorem{A\Sigma L}(2, 4)$ or $\matheorem{A\Gamma L}(2, 4)$;
%\item For $s = 5$, $G_v\cong \ZZ_2^6\rtimes\matheorem{\Gamma L}(2, 4)$.
\end{enumerate}
\end{proposition}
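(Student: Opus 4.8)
The plan is to reduce the global $s$-arc-transitivity condition to a local analysis of the vertex stabiliser $G_v$ acting on the neighbourhood $X(v)$, and then to reconstruct $G_v$ from this local action together with the kernels of the stabiliser chain. Write $L = G_v^{X(v)}$ for the permutation group that $G_v$ induces on the five neighbours of $v$. Since $G$ is arc-transitive, $L$ is a transitive subgroup of $\Sym(X(v)) \cong S_5$, and the classification of transitive groups of degree $5$ gives $L \in \{\ZZ_5, D_{10}, F_{20}, A_5, S_5\}$. The first observation is that $s$-arc-transitivity with $s \ge 2$ forces $G_v$ to be $2$-transitive on $X(v)$, so then $L \in \{F_{20}, A_5, S_5\}$; conversely, the exact value of $s$ is governed by how transitive the successive point stabilisers are along the branches of the tree of arcs emanating from $v$.

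First I would invoke Weiss's theorem \cite{Weiss1974b,Weiss1974}, already quoted above, which guarantees $s \le 5$ and that $|G_v|$ divides $2^{17}\cdot 3^2 \cdot 5$. This simultaneously bounds $s$ and, crucially, supplies the finite budget of prime factors needed to terminate the analysis. Next I would set up the pointwise stabiliser chain $G_v \ge G_v^{[1]} \ge G_v^{[2]} \ge \cdots$, where $G_v^{[i]}$ fixes the ball of radius $i$ about $v$, together with the edge-stabiliser $G_{uv}$ and its kernels. Since $G_v/G_v^{[1]} \cong L$ is determined by the first step, the problem reduces to identifying $G_v^{[1]}$ and the manner in which it is glued to $L$.

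For $s=1$ the local action $L$ is transitive but not $2$-transitive, so $L \in \{\ZZ_5, D_{10}\}$, and a Thompson--Wielandt type argument confines $G_v^{[1]}$ to a small $2$-group; tracking how the edge-reversal acts then yields exactly $\ZZ_5$, $D_{10}$, or $D_{20}$. For $s \ge 2$ I would argue inductively along the chain, using the $2$-transitivity of $L$ to control the action of $G_v^{[1]}$ on the second neighbourhood and, via the amalgam $(G_v, G_{uv})$ of vertex- and edge-stabilisers, to pin down the successive layers $G_v^{[i]}/G_v^{[i+1]}$. At each stage Weiss's divisibility bound eliminates all but finitely many possibilities, and one reads off the listed groups $F_{20}$, $F_{20}\times\ZZ_2$, $A_5$, $S_5$ for $s=2$, and $F_{20}\times\ZZ_4$, $A_4\times A_5$, $S_4\times S_5$, $(A_4\times A_5)\rtimes\ZZ_2$ for $s=3$, with the analogous larger extensions appearing for $s=4,5$ before the chain is forced to close.

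The main obstacle is the $2$-transitive range $s \ge 2$: there the kernel $G_v^{[1]}$ need no longer be trivial, and bounding its order and isomorphism type—that is, showing the $2$-group part cannot grow beyond what Weiss's bound allows and that it attaches to $L$ in precisely the listed ways—requires the commutator calculus of the amalgam method, controlling the brackets $[G_v^{[i]}, G_v^{[j]}]$ and the induced $\GF(2)$-modules for $L$. A secondary but essential point is existence: each group on the list must be realised as a genuine vertex stabiliser, which is accomplished by exhibiting the corresponding coset graphs built from the amalgam. I expect the completeness direction for local actions $A_5$ and $S_5$, where the relevant $\GF(2)$-modules are largest, to be the most technical step.
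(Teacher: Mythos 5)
The paper does not prove this statement at all: it is imported verbatim from Guo and Feng \cite[Theorem 1.1]{GuoF2012} (which in turn rests on Weiss's work), and the only ``proof'' present is the citation. So the relevant comparison is with the argument in that source, and your outline does follow its broad strategy: pass to the local action $L = G_v^{X(v)}$, use the classification of transitive groups of degree $5$ to split into $L \in \{\ZZ_5, D_{10}\}$ (the $s=1$ case) versus $L \in \{F_{20}, A_5, S_5\}$ (the $s\ge 2$ case), invoke Weiss's divisibility bound on $|G_v|$, and analyse the kernel $G_v^{[1]}$ through the stabiliser chain and the amalgam $(G_v, G_{uv})$. That is the correct skeleton of the known proof.

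As written, however, the proposal is a roadmap rather than a proof: every step that actually produces the stated list of groups is asserted, not carried out. Concretely: (a) the claim that a ``Thompson--Wielandt type argument confines $G_v^{[1]}$ to a small $2$-group'' in the $s=1$ case is left unjustified --- Thompson--Wielandt controls $G_{uv}^{[1]}$, and deducing $G_v \cong \ZZ_5$, $D_{10}$ or $D_{20}$ still requires showing $|G_v^{[1]}|\le 2$ and identifying the extension by $L$; (b) for $s=2,3$ the entire content of the theorem is the determination of $G_v^{[1]}$ and of the way it is glued to $L$, and the phrase ``one reads off the listed groups'' is exactly the part that requires the $\GF(2)$- and $\GF(5)$-module computations you defer; (c) the bound $s\le 5$ is attributed to Weiss but never connected to your inductive scheme, so the induction has no stated termination; and (d) realisability of each listed group is mentioned but no amalgam or coset graph is exhibited. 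None of these are wrong directions, but none are executed, so the proposal does not establish the proposition; for the purposes of this paper the honest proof remains the citation to \cite{GuoF2012}.
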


The following result will be needed for the study of the distinguishing cost of cubic vertex-transitive graphs with girth 5.

\begin{lemma}\label{lem:5-valent stabilizer of uv and their neighbours}
Let $X$ be a connected 5-valent $G$-arc-transitive graph where $G_v$ is solvable, and let $uv$ be an edge. The only element of $G$ that fixes $u,v$ and all of their neighbors in $X$ is the identity.
\end{lemma}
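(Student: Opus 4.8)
The plan is to combine the classification in Proposition~\ref{pro:feng} with a connectivity (propagation) argument. Write $G_v^{[1]}$ for the kernel of the action of $G_v$ on the neighbourhood $N(v)$, i.e. the pointwise stabiliser of the closed neighbourhood $N[v]=\{v\}\cup N(v)$; this is a normal subgroup of $G_v$ with $G_v/G_v^{[1]}\cong L$, where $L=G_v^{N(v)}$ is the transitive subgroup of $S_5$ induced on $N(v)$. The subgroup $K$ whose triviality we must prove is exactly $G_u^{[1]}\cap G_v^{[1]}$, so in particular $K\le G_v^{[1]}$. First I would invoke Proposition~\ref{pro:feng}: since $G_v$ is solvable and $A_5$, $S_5$, together with all vertex-stabilisers occurring for $s\in\{4,5\}$ (each of which has $A_5$ as a section), are insolvable, the only possibilities are $G_v\in\{\ZZ_5,D_{10},D_{20},F_{20},F_{20}\times\ZZ_2,F_{20}\times\ZZ_4\}$.

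Next I would compute $G_v^{[1]}$ in each case from $|G_v^{[1]}|=|G_v|/|L|$, using that $L$ is a transitive subgroup of $S_5$, hence $|L|\in\{5,10,20\}$ in the solvable range ($L\cong\ZZ_5,D_{10}$ or $F_{20}$). For $G_v\cong\ZZ_5,D_{10},F_{20}$ the local action is faithful, so $G_v^{[1]}=1$ and then $K\le G_v^{[1]}=1$ is immediate. (Here one uses that $D_{20}$ has an element of order $10$ and so does not embed in $S_5$, forcing a nontrivial kernel; a short inspection of normal subgroups gives $L\cong D_{10}$.) This leaves the three cases with $G_v^{[1]}\neq 1$: namely $D_{20}$ and $F_{20}\times\ZZ_2$, where $G_v^{[1]}\cong\ZZ_2$, and $F_{20}\times\ZZ_4$, where $G_v^{[1]}\cong\ZZ_4$; in every case $G_v^{[1]}$ is \emph{cyclic}.

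For $F_{20}\times\ZZ_4$ there is a quick finish: a vertex/arc count gives $|G|=|V(X)|\cdot 80$, while the number of $3$-arcs of $X$ equals $|V(X)|\cdot 5\cdot 4\cdot 4=80\,|V(X)|$, so the $3$-arc-transitivity forces $G$ to act \emph{regularly} on $3$-arcs. Since any nonidentity element of $K$ fixes the $3$-arc $(x,u,v,w)$ with $x\in N(u)\setminus\{v\}$ and $w\in N(v)\setminus\{u\}$ pointwise, regularity yields $K=1$. The remaining, and main, work is the two cases with $G_v^{[1]}\cong\ZZ_2$, which I would treat uniformly using that $G_w^{[1]}$ is cyclic for \emph{every} vertex $w$ (all being isomorphic by vertex-transitivity). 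Suppose $1\neq g\in K$ and let $\mathrm{Fix}(g)$ denote its set of fixed vertices, so $N[u]\cup N[v]\subseteq\mathrm{Fix}(g)$. The key observation is: whenever $N[w]\subseteq\mathrm{Fix}(g)$ one has $g\in G_w^{[1]}$, and since $G_w^{[1]}$ is normal in $G_w$ and cyclic of order $2$ or $4$, conjugation by any $h\in G_w$ sends $g$ to $g$ or $g^{-1}$; as $g$ and $g^{-1}$ have the same fixed points, $h(\mathrm{Fix}(g))=\mathrm{Fix}(hgh^{-1})=\mathrm{Fix}(g)$, i.e. $\mathrm{Fix}(g)$ is $G_w$-invariant. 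Because $G_w$ is transitive on $N(w)$, once $N[w_0]\subseteq\mathrm{Fix}(g)$ for a single neighbour $w_0$ of $w$ we obtain $N[w']\subseteq\mathrm{Fix}(g)$ for all $w'\in N(w)$, whence the ball $B_2(w)\subseteq\mathrm{Fix}(g)$.

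Finally I would feed this observation into an induction on distance from $v$. The base case $w=v$, $w_0=u$ gives $B_2(v)\subseteq\mathrm{Fix}(g)$; and if $B_n(v)\subseteq\mathrm{Fix}(g)$ with $n\ge 2$, then applying the observation at a vertex $w$ at distance $n-1$ from $v$ (taking for $w_0$ the preceding vertex on a geodesic from $v$) pushes the fixed set out to distance $n+1$. By connectivity $\mathrm{Fix}(g)=V(X)$, contradicting $g\neq 1$, so $K=1$. The main obstacle is precisely this propagation step: one must ensure that the local subgroup $G_w^{[1]}$ is cyclic (so that the nonidentity elements have a conjugation-invariant fixed-point set) and that the fixed set genuinely grows at each stage, which is exactly where the normality of $G_w^{[1]}$ in $G_w$ and the transitivity of $G_w$ on $N(w)$ are used. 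I note that this propagation argument in fact also settles the $F_{20}\times\ZZ_4$ case, so the $3$-arc count above is only an optional shortcut.
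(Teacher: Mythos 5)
Your argument is correct, and it is genuinely more than what the paper supplies: the paper's entire proof is a pointer to the proof of \cite[Theorem 4.1]{ZhouF2010}, whereas you give a self-contained derivation. Your route is the standard local-analysis one, and your group-theoretic bookkeeping checks out: solvability together with Proposition~\ref{pro:feng} leaves exactly $\ZZ_5$, $D_{10}$, $D_{20}$, $F_{20}$, $F_{20}\times\ZZ_2$, $F_{20}\times\ZZ_4$ (you are right that the $s=4,5$ stabilizers, omitted from the paper's quoted statement, all have $A_5$ as a section and are thus excluded); the kernels $G_v^{[1]}$ are trivial, trivial, $\ZZ_2$, trivial, $\ZZ_2$, $\ZZ_4$ respectively (e.g.\ for $D_{20}$ because $S_5$ has no element of order $10$ and $D_{20}$ has no normal subgroup of order $4$; for the last two cases because the kernel must centralize the normal Sylow $5$-subgroup). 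The propagation step is the crucial and correct idea: since $G_w^{[1]}$ is cyclic and normal in $G_w$, any $h\in G_w$ conjugates $g\in G_w^{[1]}$ to $g$ or $g^{-1}$, so $\Fix(g)=\Fix(hgh^{-1})=h(\Fix(g))$ is $G_w$-invariant, and transitivity of $G_w$ on $N(w)$ pushes the fixed closed neighbourhoods outward; connectivity then forces $g=1$. This is in essence the Gardiner--Weiss style argument that underlies the cited Zhou--Feng proof, so the two approaches agree in spirit, but yours has the advantage of making the dependence on solvability (via cyclicity of $G_v^{[1]}$) completely explicit, and the $3$-arc regularity shortcut for $F_{20}\times\ZZ_4$ is a nice, if unnecessary, observation. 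No gaps.
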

\begin{proof}
Follows from the proof of \cite[Theorem 4.1]{ZhouF2010}.
\end{proof}

\section{Distinguishing cost of cubic vertex-transitive graphs of girth 5}

In this section we will study the distinguishing cost of cubic vertex-transitive graphs of girth 5. Such graphs were classified by Eiben, Jajcay, and \v Sparl in \cite{EibenJS2019}. Before presenting this classification, we need to introduce some definitions.

Let $\Lambda$ be a finite $k$-regular graph and let $D(\Lambda)$ denote the set of its arcs. A {\em vertex-neighborhood labeling} of $\Lambda$ is a function $\rho:D(\Lambda) \mapsto \{1, 2,\ldots, k\}$ such that for each $u \in V(\Lambda)$ the restriction of $\rho$ to the set $\{(u, v):v\in \Lambda(u)\}$ of arcs emanating from $u$ is a bijection. Furthermore, let $Y$ be a graph of order 
$k$ with $V(Y) =\{v_1, v_2, \ldots, v_k\}$. The generalized truncation $T(\Lambda, \rho; Y)$ of $\Lambda$ by $Y$ with respect to $\rho$ is the graph with the vertex set $\{(u, v_i):u \in V(\Lambda), 1 \leq i \leq k\}$ and edge set
$\{ (u, v_i)(u, v_j) | v_iv_j \in  E(Y) \} \cup \{ (u, v_{\rho(u,w)})(w, v_{\rho(w,u)}) | uw \in E(\Lambda) \}.$

\begin{theorem}{\rm \cite[Theorem 6.3]{EibenJS2019}}\label{thm:generalized truncations}
Let $\Gamma$ be a connected cubic $G$-vertex-transitive graph of girth $5$. Then $\Gamma$ is isomorphic to the Petersen graph, the dodecahedron, or there exists a connected $5$-valent $G$-arc-transitive graph $\Lambda$ with the property that the induced action of $G_x$ on the neighbors $\Lambda(x)$ of a vertex $x$ in $\Lambda$ is isomorphic to $C_5$ or $D_{10}$ such that $\Gamma$ is isomorphic to a generalized truncation of $\Lambda$ by the $5$-cycle.
\end{theorem}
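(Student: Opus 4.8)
The plan is to invert the truncation construction: given $\Gamma$ one searches for a $G$-invariant partition of $V(\Gamma)$ into induced $5$-cycles (the ``fibres'') whose connecting edges form a perfect matching, and then recovers $\Lambda$ by contracting each fibre to a point. Once this combinatorial skeleton is in place, the permitted local actions fall out automatically: any $g\in G$ fixing a fibre must permute its five vertices as an automorphism of the $5$-cycle, so the relevant groups live inside $\Aut(C_5)\cong D_{10}$.

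First I would stratify by the action of $G_v$ on the three edges at $v$, equivalently by the number of $G$-orbits on edges, which is $1$, $2$, or $3$. If this action is transitive then $\Gamma$ is $G$-arc-transitive; here I would use that $\Gamma$ is then $s$-regular for some $s\le 5$ and carry out a bounded local analysis of how the girth $5$-cycles close up in the ball of radius $2$ about a vertex, concluding that $\Gamma$ is the Petersen graph or the dodecahedron. (Equivalently, in this case every edge lies on a $5$-cycle and one checks that a consistent $5$-cycle is present, so the classification recorded in Theorem~\ref{thm:girth 4 and 5} applies directly.) In the non-arc-transitive cases the local distribution of the three edges is forced to be $(1,2)$, so there is a distinguished $G$-invariant edge orbit $M$ meeting each vertex exactly once; thus $M$ is a perfect matching and the complementary edge set $A$ (two edges at each vertex) is $2$-regular, a disjoint union of $G$-equivalent cycles of common length $\ell\ge 5$.

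The heart of the argument, and the main obstacle, is to prove that $\ell=5$ and that each edge of $M$ joins two \emph{distinct} $A$-cycles, so that the $A$-cycles are exactly the fibres of a truncation. Since $M$ is a matching, any girth $5$-cycle of $\Gamma$ meets $M$ in at most two edges; because the components of the $A$-subgraph are precisely the $A$-cycles, a $5$-cycle using a single matching edge would force that edge to be a chord joining two vertices of one $A$-cycle at distance $4$, and propagating this by vertex- and edge-transitivity forces $\ell\mid 8$. The delicate point is to exclude these competing decompositions into longer cycles: the first candidate, $\ell=8$ with antipodal matching chords, is killed because it manufactures a $4$-cycle, contradicting the girth, and the analogous configurations with two matching edges must be treated similarly. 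Because each $A$-cycle is carried to itself and rotated by a shunt in $G$ it is a consistent cycle, which gives an alternative route: a graph whose consistent girth cycles cover every edge is already Petersen or the dodecahedron by Theorem~\ref{thm:girth 4 and 5}, whereas here the matching edges lie on no $A$-cycle, and this contradiction pins down $\ell=5$ together with the between-fibres property.

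Finally I would define $\Lambda$ by contracting each $A$-cycle to a vertex and retaining $M$ as its edge set. Then $\Lambda$ is connected (as $\Gamma$ is) and $5$-valent (each fibre has five vertices, each contributing one matching edge), and the $G$-action descends to a vertex-transitive and hence arc-transitive action on $\Lambda$. For a vertex $x$ of $\Lambda$ the stabilizer $G_x$ preserves the fibre $5$-cycle over $x$, so its induced action on $\Lambda(x)$ is a transitive subgroup of $\Aut(C_5)\cong D_{10}$; the only such subgroups are $C_5$ and $D_{10}$, yielding exactly the two permitted local actions. Labelling the five arcs out of each vertex of $\Lambda$ by the position at which the corresponding matching edge attaches to the fibre produces a vertex-neighborhood labeling $\rho$ for which $\Gamma\cong T(\Lambda,\rho;C_5)$. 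This also explains the two sporadic graphs: although both are arc-transitive and so are detected already in the first step, the Petersen graph is only a truncation of the two-vertex \emph{multigraph} and the dodecahedron admits no partition of its vertices into disjoint $5$-cycles at all, which is why neither arises as a generalized truncation of a simple $5$-valent graph and both must be listed separately.
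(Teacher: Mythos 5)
First, a point of comparison: the paper offers no proof of this statement at all --- it is quoted verbatim from \cite[Theorem 6.3]{EibenJS2019} and used as a black box --- so there is no internal argument to measure yours against. Judged on its own merits, your reconstruction has the right overall architecture (find a $G$-invariant partition of $V(\Gamma)$ into $5$-cycles joined by a perfect matching, then contract to recover $\Lambda$), but several of its load-bearing steps are asserted rather than proved.

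Concretely: (i) your arc-transitive branch relies on every cubic arc-transitive graph of girth $5$ having a consistent girth cycle so that Theorem~\ref{thm:girth 4 and 5} applies, but the paper's Lemma~\ref{4-arc transitive} only produces a consistent girth cycle under $3$-arc-transitivity; for $1$- or $2$-regular graphs of girth $5$ you would need the separate (nontrivial) argument of Glover--Maru\v si\v c, which you do not supply. (ii) The claim that non-arc-transitivity forces the local edge-orbit distribution $(1,2)$ overlooks the distribution $(1,1,1)$: a priori $\Gamma$ could have three edge orbits, each a perfect matching, in which case there is no distinguished matching $M$, and the complement of any one matching is a union of \emph{even} cycles and so cannot be the fibre $2$-factor of a truncation by $C_5$; ruling this configuration out is itself part of what the theorem asserts. (iii) The heart of the matter --- that the $2$-factor $A$ consists of $5$-cycles and that every $M$-edge joins two distinct $A$-cycles --- is exactly where your argument thins out: the case of a girth cycle containing two matching edges is dismissed with ``must be treated similarly,'' and your fallback that each $A$-cycle is consistent because it is ``rotated by a shunt in $G$'' is unjustified, since the setwise stabilizer of an $A$-cycle of length $\ell$ acts on it only as a transitive subgroup of a dihedral group, and for $\ell>5$ such a subgroup need not contain the one-step rotation; the subsequent appeal to Theorem~\ref{thm:girth 4 and 5} is in any case not a coherent deduction of $\ell=5$. (iv) Finally, ``vertex-transitive and hence arc-transitive'' for $\Lambda$ is a false implication as written; arc-transitivity of $\Lambda$ must instead be deduced from the transitivity of $G_x$ on the five vertices of the fibre over $x$ (which does hold, because the fibres form a block system). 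Your outline is the natural one and the endpoints are correct, but the gaps in (ii) and (iii) are the actual substance of the theorem, so the proposal does not constitute a proof.
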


\begin{remark}
Based on the communication with the authors of \cite{EibenJS2019}, it was discovered that the last statement of \cite[Theorem 6.3]{EibenJS2019}, saying that $\Aut(\Gamma)\cong \Aut(\Lambda)$ is incorrect, and that the correct statement, which is actually proved in \cite{EibenJS2019} is $\Aut(\Gamma)\leq \Aut(\Lambda)$. The same remark applies to the classifications of cubic vertex-transitive graphs with girth 3 or 4 given in  \cite{EibenJS2019}.
\end{remark}

\begin{corollary}
Let $\Gamma$ be a connected cubic $G$-vertex-transitive graph of girth $5$ that is not isomorphic to the Petersen graph or the dodecahedron and $v\in V(\Gamma)$. Then $|G_v|\in \{1,2,4\}$.
\end{corollary}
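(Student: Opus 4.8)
The plan is to use the structural dichotomy of Theorem~\ref{thm:generalized truncations} to reduce the computation of $|G_v|$ to a computation of a vertex-stabilizer order in the auxiliary $5$-valent graph $\Lambda$. Since $\Gamma$ is assumed to be neither the Petersen graph nor the dodecahedron, Theorem~\ref{thm:generalized truncations} hands us a connected $5$-valent $G$-arc-transitive graph $\Lambda$ with $\Gamma\cong T(\Lambda,\rho;C_5)$ such that the local action $G_x^{\Lambda(x)}$ of $G_x$ on the neighbours $\Lambda(x)$ is isomorphic to $C_5$ or $D_{10}$. I would then fix a vertex $v=(x,v_i)$ of $\Gamma$ lying in the cloud (the $5$-cycle) over $x$, and compare its stabilizer $G_v$ in $\Gamma$ with the stabilizer $G_x$ in $\Lambda$.

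The first step is the identification of stabilizers. The clouds $\{(x,v_1),\dots,(x,v_5)\}$ over the vertices $x$ of $\Lambda$ form a $G$-invariant partition of $V(\Gamma)$, so the setwise stabilizer in $G$ of the cloud over $x$ is precisely the stabilizer of $x$ for the action of $G$ on $\Lambda$. Here I would invoke the corrected form of \cite[Theorem 6.3]{EibenJS2019} recorded in the Remark above, namely $\Aut(\Gamma)\le\Aut(\Lambda)$: it implies that $G$ acts \emph{faithfully} on $\Lambda$, so this setwise stabilizer is honestly $G_x$ and carries no extra kernel. Since $G_v$ fixes $v$, it fixes the cloud over $x$ setwise, whence $G_v\le G_x$ and in fact $G_v=(G_x)_v$.

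The second step is an orbit--stabilizer count. Because the local action $G_x^{\Lambda(x)}$ is $C_5$ or $D_{10}$, it is transitive on the five neighbours of $x$, and under the labelling $\rho$ (which biject ively matches each cloud vertex $(x,v_{\rho(x,w)})$ with the neighbour $w$) this transports to a transitive action of $G_x$ on the five vertices of the cloud over $x$. Hence the $G_x$-orbit of $v$ is the whole cloud, and orbit--stabilizer yields $|G_x|=5\,|G_v|$, i.e. $|G_v|=|G_x|/5$.

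It remains to bound $|G_x|$, and this is the part I expect to need the most care. The actions $C_5$ and $D_{10}$ on five points are transitive but not $2$-transitive, so $\Lambda$ is arc-transitive but not $2$-arc-transitive, i.e. it is $(G,1)$-transitive; applying Proposition~\ref{pro:feng}(i) to the connected pentavalent $(G,1)$-transitive graph $\Lambda$ forces $G_x\cong\ZZ_5,\,D_{10}$ or $D_{20}$, so $|G_x|\in\{5,10,20\}$. Combined with $|G_v|=|G_x|/5$ this gives $|G_v|\in\{1,2,4\}$. The genuine obstacle is exactly this bound on $|G_x|$: the prescribed local action only determines the quotient $G_x/K$ by the kernel $K$ of the action on $\Lambda(x)$, and a priori $K$ (equivalently, the subgroup fixing a whole cloud pointwise, which acts as a reflection on each neighbouring cloud) could be large. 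It is precisely Proposition~\ref{pro:feng}, resting on Weiss's stabilizer bounds, that caps $|K|$ at $2$ and hence $|G_x|$ at $20$; everything else is bookkeeping with the truncation structure.
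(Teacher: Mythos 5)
Your proposal is correct and follows essentially the same route as the paper: reduce via Theorem~\ref{thm:generalized truncations} to the $5$-valent graph $\Lambda$, note that the local action $C_5$ or $D_{10}$ is not $2$-transitive so Proposition~\ref{pro:feng} forces $|G_x|\in\{5,10,20\}$, and then divide by $5$. The paper phrases the final step as a global count ($|G|=|V(\Lambda)||G_x|=|V(\Gamma)||G_v|$ with $|V(\Gamma)|=5|V(\Lambda)|$) rather than your orbit--stabilizer argument inside $G_x$, but these are the same computation.
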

\begin{proof}
By Theorem~\ref{thm:generalized truncations} there exists a connected $5$-valent $G$-arc-transitive graph $\Lambda$ with the property that the induced action of $G_x$ on $\Lambda(x)$, $x\in V(\Lambda)$, is isomorphic to $C_5$ or $D_{10}$ such that $\Gamma$ is isomorphic to a generalized truncation of $\Lambda$. Since $G_x$ is not 2-transitive on $\Lambda(x)$ it follows that $G$ does not act transitively on the set of 2-arcs in $\Lambda$. By Proposition~\ref{pro:feng} it follows that $|G_x|\in \{5,10,20\}$ for any vertex $x$ of $\Lambda$. Observe that $|G|=|V(\Lambda)||G_x|$ and $|G|=|V(\Gamma)||G_v|$, where $v$ is a vertex of $\Gamma$. Since $|V(\Gamma)|=5|V(\Lambda)|$ it follows that $|G_v|\in \{1,2,4\}$.
\end{proof}

\begin{theorem}
Let $\Gamma$ be a connected cubic vertex-transitive graph of girth $5$ with two edge orbits. Then the distinguishing cost of $\Gamma$ is 2.
\end{theorem}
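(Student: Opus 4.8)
The plan is to prove the two bounds $\rho(\Gamma)\ge 2$ and $\rho(\Gamma)\le 2$ using the presentation of $\Gamma$ as a generalized truncation. Set $A=\Aut(\Gamma)$. Having two edge orbits, $\Gamma$ is neither arc-transitive (one orbit) nor a GRR (three orbits); in particular it is not the Petersen graph or the dodecahedron, so Theorem~\ref{thm:generalized truncations} with $G=A$ writes $\Gamma=T(\Lambda,\rho;C_5)$ for a connected $5$-valent $A$-arc-transitive $\Lambda$ whose local action $A_x^{\Lambda(x)}$ is $C_5$ or $D_{10}$, and the Remark following that theorem gives an injection $A\hookrightarrow\Aut(\Lambda)$, $g\mapsto\bar g$. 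By the corollary above $|A_v|\in\{1,2,4\}$, and the value $1$ is impossible (it would make $\Gamma$ a GRR), so $|A_v|\in\{2,4\}$; correspondingly the block stabilizer $A_x$ (the stabilizer of the vertex $x$ of $\Lambda$) has order $5|A_v|\in\{10,20\}$ and is solvable, so Lemma~\ref{lem:5-valent stabilizer of uv and their neighbours} applies to $(\Lambda,A)$. Since $|A_v|\ge 2$, no singleton is distinguishing and $\rho(\Gamma)\ge 2$; as $\Gamma$ is none of the four exceptions of \cite{HuningIKST2019}, $D(\Gamma)=2$. It remains to build a distinguishing set of size $2$.

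Throughout I would use the identification of $V(\Gamma)$ with the arcs of $\Lambda$, under which $(x,v_{\rho(x,w)})$ is the arc $(x,w)$, the five vertices over $x$ form a \emph{block} with its natural $5$-cycle, the connecting partner of $(x,w)$ is $(w,x)$, and $g$ acts on arcs exactly as $\bar g$. A count of edge-types (two block edges and one connecting edge at each vertex) shows $A$ preserves the splitting of $E(\Gamma)$ into block and connecting edges, so every $g$ permutes the blocks and fixes $(w,x)$ whenever it fixes $(x,w)$. The pivotal translation is: $g$ fixes the block over $x$ pointwise if and only if $\bar g$ fixes $x$ and all of $\Lambda(x)$; writing $K_x$ for this pointwise stabilizer, $|K_x|=|A_x|/|A_x^{\Lambda(x)}|$, so $K_x$ is trivial when the local action is $D_{10}$ and $|A_v|=2$, and has order $2$ when it is $D_{10}$ and $|A_v|=4$. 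First I would eliminate the local action $C_5$: there $A_{(x,w)}=K_x$ fixes the block over $x$ pointwise and equals $A_{(w,x)}=K_w$, so it fixes the block over $w$ pointwise as well; then each $\bar g$ with $g\in A_{(x,w)}$ fixes the edge $xw$ and all neighbors of $x$ and $w$, so $\bar g=1$ by Lemma~\ref{lem:5-valent stabilizer of uv and their neighbours} and $g=1$ by injectivity, contradicting $|A_v|\ge 2$. Hence the local action is $D_{10}$.

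For the upper bound I would exhibit a pair $\{a,c\}$ with $a=(x,w)$ and $c=(z,u)$, where $z\in\Lambda(x)\setminus\{w\}$ and $u\in\Lambda(z)\setminus\{x\}$, whose setwise stabilizer is trivial. There is no swap: a $g$ with $g(a)=c,\ g(c)=a$ would give $\bar g(x)=z,\ \bar g(z)=x,\ \bar g(w)=u$, i.e.\ a flip of the edge $xz$ carrying $w$ to $u$; the flips form a coset of the arc-stabilizer $A_{(x,z)}$, whose image in the local group is the stabilizer of $z$ in $D_{10}$ and has order $2$, so at most two of the four directions in $\Lambda(z)\setminus\{x\}$ arise this way, leaving an admissible $u$. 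For the pointwise stabilizer, I would check that every nontrivial element of $A_a$ fails to fix $c$: it acts on $\Lambda(x)$ as the reflection fixing only $w$ (hence moves the block over $z$), or else it lies in $K_x$, fixes that block setwise, and—not fixing it pointwise, by Lemma~\ref{lem:5-valent stabilizer of uv and their neighbours}—acts on it as the reflection fixing only $(z,x)$, so moves $c=(z,u)$ with $u\neq x$. Thus $A_a\cap A_c=1$, $\{a,c\}$ is distinguishing, and $\rho(\Gamma)=2$.

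The hard part will be the flexible case $|A_v|=4$, where $K_x$ is nontrivial. Here $A_a$ is a Klein four-group $K_x\times K_w$ (the factors meet trivially by Lemma~\ref{lem:5-valent stabilizer of uv and their neighbours}), and its element generating $K_w$ fixes the entire block over $w$; this is exactly why the naive pair $\{(x,w),(w,u)\}$ has nontrivial pointwise stabilizer and must be abandoned in favor of a second-neighborhood vertex $c=(z,u)$. The delicate bookkeeping is to verify, for this single $c$, that each of the three involutions of $A_a$ is simultaneously defeated—two by moving the block over $z$ and one by reflecting it away from $(z,x)$—while the independent flip count keeps $a$ and $c$ non-swappable.
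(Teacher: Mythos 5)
Your proof is correct, and it lives in the same framework as the paper's --- both arguments pass to the generalized truncation $T(\Lambda,\rho;C_5)$ via Theorem~\ref{thm:generalized truncations}, use $\Aut(\Gamma)\le\Aut(\Lambda)$ to reduce to the $5$-valent graph, and close with Lemma~\ref{lem:5-valent stabilizer of uv and their neighbours} --- but the mechanism by which you kill the setwise stabilizer of the chosen pair is genuinely different. The paper colors the two edge orbits, takes the two vertices $v_2,u_3$ sitting at prescribed positions on the two $5$-cycle blocks flanking a matching edge, and observes that $v_2v_1u_1u_2u_3$ is the \emph{unique} black--red--black--black path between them; uniqueness of this colored path forces any stabilizing automorphism to fix five vertices, hence both blocks and all their neighbors, and Lemma~\ref{lem:5-valent stabilizer of uv and their neighbours} finishes. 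You instead work group-theoretically: you pin down the local action as $D_{10}$ (your elimination of $C_5$ is correct and is information the paper never needs to extract), identify $A_a$ with $K_x\times K_w$ in the flexible case, check that each involution acts on the relevant block as a reflection moving $c$, and dispose of swaps by a coset-counting argument showing at most two of the four candidate directions $u$ are ``bad.'' The paper's route buys an explicit, choice-free pair and a short purely combinatorial verification; yours buys finer structural information (the local group, the kernel orders, the exact shape of the vertex stabilizer) at the cost of a case analysis and a non-canonical choice of $u$. Two small remarks: your ``count of edge-types'' is not by itself a proof that $\Aut(\Gamma)$ preserves the block/connecting splitting --- but this is immaterial, since the fact already follows from the inclusion $\Aut(\Gamma)\le\Aut(\Lambda)$ of the Remark, which you invoke; and your explicit treatment of the lower bound $\rho(\Gamma)\ge 2$ and of the swap case is more careful than the paper's, which leaves the former implicit.
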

\begin{proof}
Let $\Gamma$ be a connected cubic vertex-transitive graph of girth 5 with two edge orbits. Let $G=\Aut(\Gamma)$. Color the edges of $\Gamma$ whose orbits induce a perfect matching with the color red, and other edges with black. Let $v_1u_1$ be an arbitrary red edge, and let $C_1=v_1v_2v_3v_4v_5$ and $C_2=u_1u_2u_3u_4u_5$ be the black cycles of length 5 containing $v_1$ and $u_1$ (see Figure~\ref{fig1}).
First observe that $v_1u_1$ is the only edge between the cycles $C_1$ and $C_2$. This follows from the fact that $\Gamma$ is a generalized triangulation, and from the definition of generalized triangulations.

\begin{figure}[h]
\begin{center}
\includegraphics[scale=0.6]{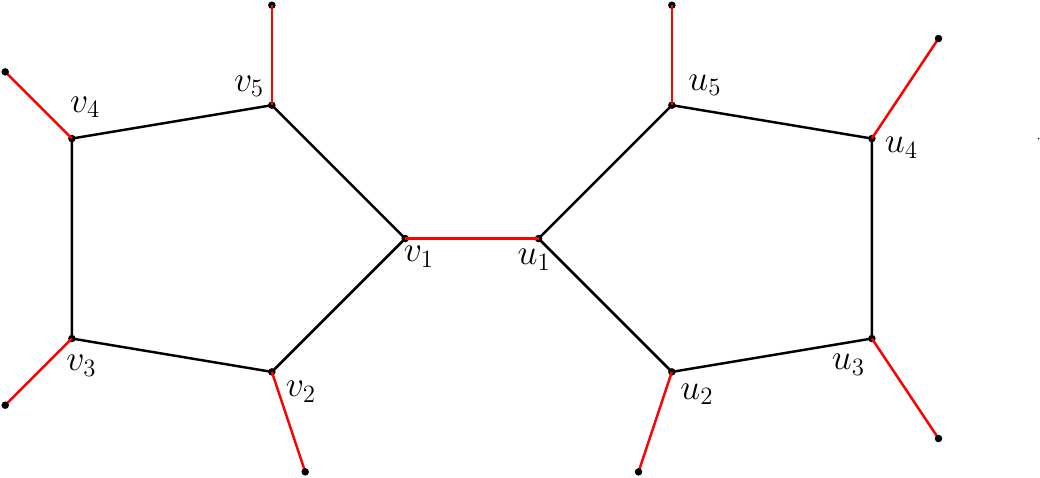}
\caption{Local structure of $\Gamma$ around $u_1v_1$. Black and red edges indicate different edge orbits.}
\label{fig1}
\end{center}
\end{figure}

Let $S=\{v_2,u_3\}$. We claim that the only element of $G$ that fixes the set $S$ is the identity.
Let $g\in \Aut(\Gamma)$ fix the set $S$ setwise.
Observe that $v_2v_1u_1u_2u_3$ is a $BRBB$ (black-red-black-black) path of length $4$ between $v_2$ and $u_3$.
Since $g$ fixes $\{v_2,u_3\}$ setwise, it follows that $g$ must map this $BRBB$ path of length $4$ into another $BRBB$ path of length $4$ from $v_2$ to $u_3$ or from $u_3$ to $v_2$. However, since $v_1u_1$ is the only red edge between $C_1$ and $C_2$ it follows that $v_2v_1u_1u_2u_3$ is the unique $BRBB$ path of length $4$ between $v_2$ and $u_3$, so all points lying on this $4$-path must be fixed, that is, $g$ fixes each of the points $v_2,v_1,u_1,u_2,u_3$. It follows that $g$ fixes all of the points $v_i$ and $u_i$, and their neighbors.

Recall that $\Gamma$ is a generalized triangulation of a $5$-valent arc-transitive graph $\Lambda$. 
Let $\tilde{G}$ be the subgroup of $\Aut(\Lambda)$ induced by the action of $G$. Note that $G\cong \tilde{G}$ and that $\tilde{G}$ acts 1-arc-transitively on $\Lambda$.

Then $g$ corresponds to an automorphism $\tilde{g}$ of $\Lambda$, and it follows that $\tilde{g}$ fixes two adjacent vertices of $\Lambda$ (corresponding to $5$-cycles $C_1$ and $C_2$) as well as all of their neighbors.
By Lemma~\ref{lem:5-valent stabilizer of uv and their neighbours} it follows that the only such automorphism of $\Lambda$ is the identity. We conclude that $g$ is the identity.
\end{proof}

The next natural step would be to consider cubic vertex-transitive graphs which are flexible with girth larger than 5. The classification of cubic vertex-transitive graphs of girth 6 was obtained by Poto\v cnik and Vidali \cite{PotocnikV2022}, so one approach would be to try using this classification to determine the distinguishing costs of those graphs, which we leave as an open question for further work.
\begin{problem}
Determine the distinguishing cost of cubic vertex-transitive graphs of girth 6.       
\end{problem}
Moving to larger girths, the authors are not aware of constructions of cubic flexible vertex-transitive graphs with arbitrary large girth $g$, which we pose as an open problem.

\begin{problem}\label{prob:flexible large girth}
Does there exist a connected flexible cubic vertex-transitive graph of girth $g$ for every positive integer $g\geq 6$?
\end{problem}

A cubic vertex-transitive graph that has two edge-orbits is 
a circular ladder graph or a M\"obius ladder graph, or is obtained from a $4$-valent arc-transitive graph admitting  an arc-transitive cycle decomposition \cite[Corollary 13]{PotocnikSV2013}. By \cite{MiklavicPW2007}, the permutation group induced by the action of the point stabilizer on its neighbors is  $\mathbb{Z}_2\times \mathbb{Z}_2$, $\mathbb{Z}_4$ or $D_4$. If the action of the point stabilizer in a $4$-valent arc-transitive graph is isomorphic to $\mathbb{Z}_2\times \mathbb{Z}_2$ or $\mathbb{Z}_4$, then the obtained cubic vertex-transitive graphs are rigid, and the flexible ones correspond to those obtained from $4$-valent arc-transitive graphs with point stabilizer acting locally on neighbours as $D_4$.
It follows that one approach to answer Problem~\ref{prob:flexible large girth} is to start with a 4-valent arc-transitive graph of large girth with local action of point stabilizer isomorphic to $D_4$ admitting the so-called arc-transitive cycle decomposition, and then use the method explained in \cite{PotocnikSV2013} to construct a cubic flexible graph of large girth.

\section{Consistent cycles in cubic graphs}
In this section we will study consistent cycles in cubic graphs with small girth. 
A cycle in a graph is {\it consistent if the automorphism group of the graph admits a one‐step rotation of this cycle.}
The main motivation for this section is to answer a question posed in \cite[Question 6.9]{ImrichLTW2022}.  We will show in Corollary \ref{final result} that for $s\ge 3$ there are no infinite cubic $s$-arc-transitive graphs of girth $6$.
The authors noticed that similar type results \cite[Proposition 3.4]{GloverM2007} for finite graphs first showed that $s$-arc-transitive graphs of small girth  have consistent cycles. Our proof of Corollary \ref{final result} uses consistent cycles, which were also central to the arguments of  \cite[Proposition 3.4]{GloverM2007} which classifies finite cubic arc-transitive graphs of girth at most five.
We will thus first study cubic graphs $\Gamma$ of small girth with a consistent cycle with the additional condition that every edge of $\Gamma$ is contained in a girth cycle, an obvious condition that edge-transitive, and hence arc-transitive, graphs also possess. 

\begin{theorem}\label{thm:girth 4 and 5}
Let $\Gamma$ be a connected cubic graph (finite or infinite) of girth $g$ with a consistent cycle of length $g$ and such that every edge of $\Gamma$ is contained in a $g$-cycle.  If $g = 4$ then $\Gamma$ is $K_{3,3}$ or the cube, while if $g = 5$, then $\Gamma$ is the Petersen graph or the dodecahedron.
\end{theorem}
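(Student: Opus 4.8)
The plan is to reconstruct $\Gamma$ explicitly from the rotary symmetry of the consistent $g$-cycle, so that the ``finite or infinite'' dichotomy collapses: the hypotheses will force the graph to saturate after a bounded number of layers. Write $C=(v_0v_1\cdots v_{g-1})$ for the consistent $g$-cycle and let $\sigma\in\Aut(\Gamma)$ realize a one-step rotation, so $\sigma(v_i)=v_{i+1}$ with indices read modulo $g$. Since $\Gamma$ is cubic, each $v_i$ has a unique neighbor $w_i\notin C$; because $\sigma$ preserves $C$ it carries the off-cycle neighbor of $v_i$ to that of $v_{i+1}$, whence $\sigma(w_i)=w_{i+1}$. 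The girth hypothesis restricts coincidences: if $w_i=w_j$ with $v_i,v_j$ at cycle-distance $1$ we get a triangle and at distance $2$ a $4$-cycle, so for $g=5$ all $w_i$ are distinct, while for $g=4$ the only possible coincidence is the antipodal one $w_0=w_2$ (forcing also $w_1=w_3$).

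First I would dispose of $g=4$. Applying the hypothesis that the spoke $v_0w_0$ lies on a $4$-cycle and chasing neighbors shows that either $w_0=w_2$ or $w_0$ is adjacent to $w_1$ or $w_3$. In the coincidence case the third neighbor $z$ of $w_0$ satisfies $N(w_0)=\{v_0,v_2,z\}$; requiring the edge $w_0z$ to lie on a $4$-cycle and using that $v_0,v_2$ have neighbors only among $v_1,v_3,w_0,w_1$ forces $z=w_1$, so the six vertices $v_0,v_1,v_2,v_3,w_0,w_1$ form $K_{3,3}$ with parts $\{v_0,v_2,w_1\}$ and $\{v_1,v_3,w_0\}$, and by connectedness $\Gamma\cong K_{3,3}$. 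Otherwise the $w_i$ are distinct and $w_0\sim w_1$ (say); since the induced subgraph on $W=\{w_0,\dots,w_3\}$ is $\sigma|_W$-invariant, hence a circulant on $\ZZ_4$, all consecutive $w_i$ are adjacent and $W$ is a $4$-cycle. Then each $w_i$ has neighbors $v_i,w_{i-1},w_{i+1}$, the eight vertices are saturated, and $\Gamma$ is the cube.

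The case $g=5$ follows the same philosophy over more layers. Here $\Gamma[W]$ is a circulant on $\ZZ_5$, and a distance-$1$ chord $w_iw_{i+1}$ would close the $4$-cycle $v_iv_{i+1}w_{i+1}w_i$, which is forbidden; hence the connection set lies in $\{\pm2\}$, so $\Gamma[W]$ is either the pentagram (all edges $w_iw_{i+2}$) or edgeless. In the pentagram case each $w_i$ has neighbors $v_i,w_{i+2},w_{i-2}$, the ten vertices are saturated, and $\Gamma$ is the Petersen graph. In the edgeless case the $5$-cycle through $v_0w_0$ cannot use a chord of $W$, so it must read $v_0w_0pw_1v_1$ (or its mirror), producing a common neighbor $p_0$ of $w_0,w_1$; this common neighbor is unique (two of them would make a $4$-cycle), so $\sigma$ transports it to common neighbors $p_i$ of $w_i,w_{i+1}$ for every $i$. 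A distinctness device recurs throughout: if two consecutive layer-vertices coincided, $\sigma$ would fix that vertex, and it would then be adjacent to \emph{every} $w_j$, violating cubicity; this shows the $p_i$ are distinct and completes the neighborhoods $w_i\sim v_i,p_{i-1},p_i$. Next $\Gamma[P]$ has no chords (a distance-$1$ chord gives a triangle through some $w_{i+1}$, a distance-$2$ chord forces $p_i$ to have degree $4$), so each $p_i$ has a fresh third neighbor $q_i$. Finally, applying the $5$-cycle hypothesis to $p_0q_0$ forces the inner edge $q_0\sim q_1$, so $Q=\{q_0,\dots,q_4\}$ is an inner pentagon with $q_i\sim p_i,q_{i-1},q_{i+1}$, again distinct by the same argument. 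At this point all twenty vertices of $V\cup W\cup P\cup Q$ have degree $3$, so $\Gamma$ is the dodecahedron.

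I expect the main obstacle to be organizing this layer-by-layer reconstruction so that it provably terminates rather than spiralling off to an infinite graph. Two mechanisms guarantee closure: the ``every edge lies on a $g$-cycle'' hypothesis, which at each stage forces an edge back toward the already-constructed vertices instead of into new territory, and the degree bound together with the $\sigma$-fixed-point argument, which rules out the degenerate identifications that would otherwise keep the construction alive. The delicate part is the bookkeeping of which neighborhoods are already complete, so that no vertex is silently handed a fourth neighbor; once that is controlled, connectedness and cubicity pin $\Gamma$ down to exactly the four listed graphs, and the infinite case simply never occurs.
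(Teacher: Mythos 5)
Your proposal is correct and follows essentially the same strategy as the paper: use the one-step rotation of the consistent $g$-cycle to propagate the off-cycle neighbors layer by layer, rule out coincidences via the girth and degree bounds, and use the ``every edge lies on a $g$-cycle'' hypothesis to force each new layer's edges back into the constructed subgraph until the graph saturates as $K_{3,3}$, the cube, the Petersen graph, or the dodecahedron. Your treatment of the $g=4$ antipodal-coincidence case (applying the $4$-cycle hypothesis directly to the edge $w_0z$ to force $z=w_1$) is in fact a little more direct than the paper's contradiction argument there, but this is a local simplification rather than a different route.
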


\begin{proof}
Let $v_0v_1v_2\ldots v_{g-1}v_0$ be a consistent $g$-cycle in $\Gamma$.  Let $\gamma\in \Aut(\Gamma)$ such that $\gamma(v_i) = v_{i + 1}$ with arithmetic in the subscript done modulo $g$.  As $\Gamma$ is cubic,  there exists $u_i\in V(\Gamma)$ with $u_iv_i\in E(\Gamma)$ and no $u_i$ is any $v_j$, where $i,j\in\Z_g$. We consider various cases depending on the different girths separately.

{\bf Case 1:} $g=4$.

Suppose first that  some $u_i = u_j$.  Note that it cannot be the case that $u_0 = u_1$ or $u_0 = u_3$ as both of those possibilities give a $3$-cycle in $\Gamma$ which has girth $4$.  Then $u_0 = u_2$.  As $\gamma(v_i) = v_{i + 1}$ we have that $u_1 = u_3$.  Also, $u_0\not = u_1$ as $\Gamma$ is cubic.  If $u_0u_1\in E(\Gamma)$, then it is easy to see that $\Gamma \cong K_{3,3}$ and the result follows.  Otherwise, there is $w_0,w_1\in V(\Gamma)$ with $u_0w_0,u_1w_1\in E(\Gamma)$, but $w_0,w_1\not\in\{v_i,u_j:i\in\Z_4,j\in\Z_2\}$.  Note that if $w_0 = w_1$, then $w_0$ is adjacent to some vertex $x$ with $x\not\in\{v_i,u_i,w_0\}$.  
But then $w_0x$ is a bridge of $\Gamma$,  contradicting the assumption that $w_0x$ is contained in a $4$-cycle.  So $w_0\not = w_1$.  Let $D$ be a $4$-cycle in $\Gamma$ that contains the edge $w_1u_1$, and let $\Delta = \Gamma[\{v_i,u_j,w_j:i\in\Z_4,j\in\Z_2\}]$.  As every vertex of $\Delta$ other than $w_0$ and $w_1$ has valency $3$, $D$ must contain a path in $\Delta$ from $u_1$ to $w_0$.  However, the distance in $\Delta$ from $u_1$ to $w_0$ is $4$.  Thus $D$ cannot exist, a contradiction.

Suppose now that the $u_i$ are all distinct, $i\in\Z_4$.  Then $u_0v_0v_1u_1$ is a path on four vertices in $\Gamma$, as is $u_3v_3v_0u_1$.  Any $4$-cycle $D$ in $\Gamma$ that contains $v_0u_0$ contains one of the two previously mentioned paths of length $3$.  So either $u_0v_0v_1u_1u_0$ or $u_3v_3v_0u_0u_3$ is a $4$-cycle in $\Gamma$.  As $\gamma(v_i) = v_{i + 1}$, and $u_i$ is the unique neighbor of $v_i$ not on $v_0v_1v_2v_3v_0$, we see $\gamma(u_i) = u_{i + 1}$.  We conclude in either case that $u_iu_{i+1}\in E(\Gamma)$ for all $i\in\Z_4$.  Then $\Gamma$ is isomorphic to the cube.

{\bf Case 2:} $g=5$.

As $\Gamma$ has girth $5$, the $u_i$, $0\le i\le 4$, are all distinct.  If some $u_iu_j\in E(\Gamma)$, $i\not = j$, then, as $\Gamma$ has girth $5$, it must be that $u_iu_{i + 2}$ or $u_iu_{i+3}\in E(\Gamma)$.  As $\gamma(u_i) = u_{i + 1}$, we see that both of these cases occur, and that $u_iu_{i + 2},u_iu_{i + 3}\in E(\Gamma)$ for every $i\in\Z_5$.  The subgraph induced by the vertices $\{u_i,v_i:i\in\Z_5\}$ is $3$-regular, and so all of $\Gamma$.  By inspection of the edges, this graph is the Petersen graph.

Suppose now that $u_iu_j\not \in E(\Gamma)$ for $i\not = j$.  Then each $u_i$ is adjacent to two additional vertices, say $w_{i,0},w_{i,1}$, $i\in\Z_5$.  It cannot be the case that the $w_{i,0}$ and $w_{i,1}$ are all distinct, as if that were the case, the edges $u_iv_i$ would not be contained in a $5$-cycle.  Additionally, $w_{i,0}$ must then be  in the same  $5$-cycle as some of  $w_{i\pm 1,0}$ or $w_{i\pm 1,1}$ as, again, the edges $u_iv_i$ would not be contained in a $5$-cycle.  This means $\{w_{i,j}:i\in\Z_5,j\in\Z_2\}$ consists of $5$ distinct vertices $w_0,\ldots,w_4$, and $u_iw_i,u_{i+1}w_i\in E(\Gamma)$.  As each $w_i$ has valency $2$ in the subgraph of $\Gamma$ constructed so far, there exist vertices $z_0,\ldots,z_4$, distinct from any $u_i,v_i,w_i$, such that $w_iz_i\in E(\Gamma)$.  Note that if the $z_i$ are not all distinct, then $\Gamma$ either has girth $4$ (if $z_i = z_{i + 1}$) or each $z_i$ has valency at least $4$ (after applying $\gamma$ repetitively).  
The only way in which these edges can be contained in $5$-cycles is if $z_iz_{i +1}\in E(\Gamma)$ for all $i\in\Z_4$.  The subgraph induced by the vertices $\{u_i,v_i,w_i,z_i:i\in\Z_4\}$ is $3$-regular, and so all of $\Gamma$.  By inspection of the edges, this graph is the dodecahedron.
\end{proof}

We now consider the case when $\Gamma$ has girth $6$.  The proof is quite a bit longer, and so is broken into two natural cases.  In both cases, we show that such a graph is either finite, or has restricted `local' structure.  We also show that in the latter case, the graph cannot have the property that a $3$-arc is contained in a $6$-cycle.

\begin{lemma}\label{more than 12 edges}
Let $\Gamma$ be a cubic graph of girth $6$ (finite or infinite) with a consistent girth cycle and with every edge of $\Gamma$ contained in a $6$-cycle.  Assume that the subgraph $\Delta$ of $\Gamma$ induced by a consistent cycle and its neighbors has more than $12$ edges.  Then $\Gamma$ is either the Heawood graph or contains the graph in Figure \ref{subgraph of order 18} as a subgraph.  If $\Gamma$ also has the property that every $3$-arc is contained in a $6$-cycle, then $\Gamma$ is the Heawood graph. 
\end{lemma}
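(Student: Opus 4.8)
The plan is to follow the setup of Theorem~\ref{thm:girth 4 and 5} and then push the girth-$6$ analysis one layer further, out to the second neighborhood of the consistent cycle. First I would fix a consistent $6$-cycle $v_0v_1\cdots v_5v_0$ and an automorphism $\gamma$ with $\gamma(v_i)=v_{i+1}$ (indices mod $6$), and for each $i$ let $u_i$ be the third neighbor of $v_i$; since the cycle is consistent, $\gamma(u_i)=u_{i+1}$. A short girth check shows the $u_i$ are pairwise distinct and distinct from the $v_j$ (a coincidence $u_i=u_{i+1}$, $u_i=u_{i+2}$, or $u_i=u_{i+3}$ would force a $3$-, $4$-, or $5$-cycle). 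Thus $\Delta$ carries the $6$ cycle edges and the $6$ spokes $v_iu_i$, and any further edge lies among the $u_i$: chords of the cycle and edges $v_iu_j$ with $i\neq j$ are excluded by girth, while among the $u_i$ only $u_iu_{i+3}$ survives ($u_iu_{i+1}$ gives a $4$-cycle and $u_iu_{i+2}$ a $5$-cycle). Hence $\Delta$ has more than $12$ edges exactly when one such diagonal is present, and then applying $\gamma$ forces all three diagonals $u_0u_3,u_1u_4,u_2u_5$, so $\Delta$ has $15$ edges.

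Next I would introduce the third neighbor $w_i$ of each $u_i$, again with $\gamma(w_i)=w_{i+1}$ and with $w_i\notin\{v_j,u_j\}$. Girth forbids $w_i=w_{i\pm 1}$ (a $5$-cycle) and $w_i=w_{i+3}$ (a triangle using the diagonal $u_iu_{i+3}$), and since the pattern of coincidences is $\gamma$-invariant the only options are that all $w_i$ are distinct, or that $w_i=w_{i+2}$ for every $i$. In the latter case there are exactly two extra vertices $a=w_0=w_2=w_4$ and $b=w_1=w_3=w_5$, the whole graph has $14$ vertices and is cubic, and one checks directly that it has girth $6$; being a trivalent girth-$6$ graph on $14$ vertices it is the unique $(3,6)$-cage, i.e.\ the Heawood graph. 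In the former case the $18$ vertices $\{v_i,u_i,w_i\}$ carry precisely the graph of Figure~\ref{subgraph of order 18}, which is therefore a subgraph of $\Gamma$. This settles the first dichotomy.

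For the refinement I would assume every $3$-arc lies in a $6$-cycle and suppose, for contradiction, that we are in the all-distinct case. The $3$-arc $u_0v_0v_1u_1$ can only be closed to a $6$-cycle by two internal vertices adjacent, respectively, to $u_0$ and to $u_1$; since $N(u_0)=\{v_0,u_3,w_0\}$ and $N(u_1)=\{v_1,u_4,w_1\}$, checking the possibilities shows the only admissible closure forces $w_0w_1\in E(\Gamma)$. By consistency $w_iw_{i+1}\in E(\Gamma)$ for all $i$, so the $w_i$ form a $6$-cycle, each $w_i$ now has degree $3$, the $18$ vertices induce a cubic graph, and, as $\Gamma$ is connected, $\Gamma$ equals this graph. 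But then every neighborhood is determined, and the $3$-arc $w_0u_0u_3v_3$ has no $6$-cycle: its only available internal vertices are neighbors of $w_0$ and of $v_3$, namely $\{w_1,w_5\}$ and $\{v_2,v_4\}$, and no edge joins these two sets. This contradiction eliminates the all-distinct case and leaves only the Heawood graph.

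I expect this last step to be the crux. The delicate point is choosing the right two $3$-arcs: one whose forced closure collapses the second neighborhood into the $w$-cycle and thereby determines the entire graph, and a second, diagonal-crossing $3$-arc whose closure can then be ruled out precisely because all relevant neighborhoods have become known. Getting the order of these deductions right—first pin down the graph, then exhibit the uncoverable $3$-arc—is what makes the argument close up.
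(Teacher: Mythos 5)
Your proposal is correct and follows essentially the same route as the paper: the same forced diagonals $u_iu_{i+3}$, the same dichotomy between the $14$-vertex (Heawood) case and the $18$-vertex subgraph, the same use of the $3$-arc $u_0v_0v_1u_1$ to force $w_0w_1\in E(\Gamma)$, and a final $3$-arc with no $6$-cycle closure (you use $w_0u_0u_3v_3$ where the paper uses the symmetric variant $v_5u_5u_2w_2$). The only cosmetic differences are that you organize the Heawood case via the coincidence pattern $w_i=w_{i+2}$ rather than by whether $w_0$ meets more than one $u_i$, and identify the Heawood graph as the unique $(3,6)$-cage rather than by inspection of a drawing.
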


\begin{proof}
Let $C = v_0v_1\ldots v_5v_0$ be a consistent cycle in $\Gamma$.  Let $\gamma\in \Aut(\Gamma)$ such that $\gamma(v_i) = v_{i + 1}$ with arithmetic in the subscript done modulo $6$.  As $\Gamma$ is cubic and $C$ is a $6$-cycle, the vertex $v_0$ must be adjacent to some other vertex $u_0\not = v_i$, $i\in\Z_6$.  Repetitively applying $\gamma$, we see that there are vertices $u_i\in V(\Gamma)$, all distinct from $\{v_i:i\in\Z_6\}$, and $v_iu_i\in E(\Gamma)$ for every $i\in\Z_6$.  This subgraph of $\Gamma$ is shown in Figure \ref{base graph}, and will be called the base graph.

\begin{figure}[ht]
\begin{center}
\includegraphics[scale=0.7]{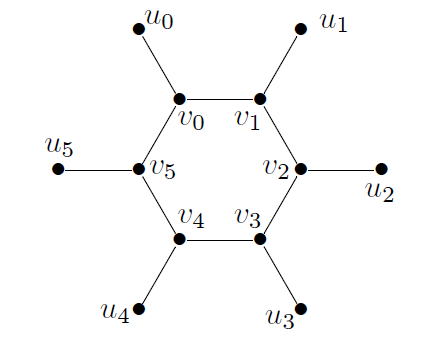}
\caption{The Base Graph.}
\label{base graph}
\end{center}
\end{figure}

Note that the vertex sets of the base graph and $\Delta$ are the same.  As we are assuming $\Delta$ has more than $12$ edges, some of the $u_i$, $i\in\Z_6$, are adjacent to each other.  As $\Gamma$ has girth $6$, the only possibilities are that $u_iu_{i+3}\in E(\Gamma)$ for some $i\in\Z_6$.  Applying powers of $\gamma$ to this edge, we see that $u_iu_{i + 3}\in E(\Gamma)$ for every $i\in\Z_6$.  We have now determined $15$ edges in $\Gamma$ with $v_i$ of valency $3$ and $u_i$ of valence $2$ in the subgraph of $\Gamma$ constructed so far, and each such edge is contained in a $6$-cycle.  Thus each $u_i$ is adjacent to some other vertex of $\Gamma$, and this other vertex is neither a $v_i$ or a $u_i$.  We let $w_0\in V(\Gamma)$ with $u_0w_0\in V(\Gamma)$.  We will consider two cases, depending upon whether or not $w_0$ is adjacent to more than one $u_i$, $i\in\Z_6$.

If $w_0$ is adjacent to more than one $u_i$, then, as $\Gamma$ has girth $6$, $w_0$ cannot be adjacent to $u_1$, $u_3$, or $u_5$. Applying $\gamma$ to the edges and vertices we have identified so far, we see $u_1w_1\in E(\Gamma)$ for some $w_1\in V(\Gamma)$, $v_i\not = w_i\not= u_1$, $i\in\Z_6$, and $w_1\not = w_0$ (as $\Gamma$ has girth $6$).   If $w_0$ is adjacent to $u_2$, then $w_1$ is adjacent to $u_3$.  Applying $\gamma$ to the edges and vertices we have now found, we see that $w_0$ is adjacent to $u_4$ and $w_1$ is adjacent to $u_5$.  Every vertex is now cubic, and so the graph has been constructed.  See the left-hand side of Figure \ref{Heawood}.  We observe that the given graph is the Heawood graph - the right-hand side of Figure \ref{Heawood} with the labeling used here is a usual drawing of the Heawood graph (see \cite[Figure 4.4]{Book}).  The case where $w_0$ is adjacent to $u_4$ is the reflection of the case where $w_0$ is adjacent to $u_2$, and so also results in the Heawood graph.

\begin{figure}[ht]
\begin{center}

\includegraphics[scale=0.6]{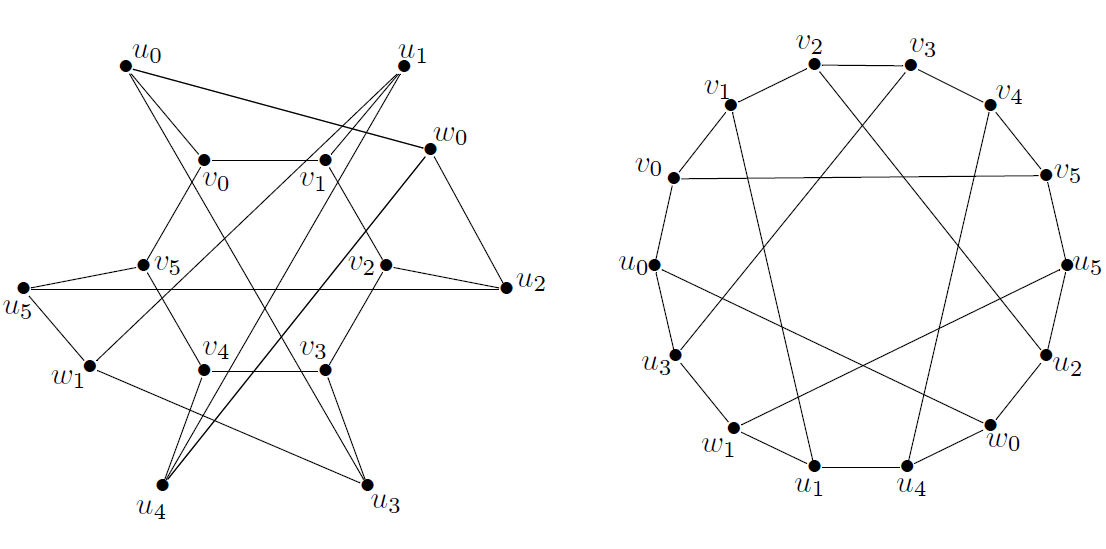}

\caption{The Heawood graph.}

\label{Heawood}
\end{center}
\end{figure}

%\begin{tikzpicture}[font=\scriptsize]
%\tikzstyle{vertex}=[circle,draw,inner sep=1pt,color= black,fill=white]

%\node (fa11) at (2.38,-2.1) {};

If $w_0$ is adjacent to exactly one $u_i$, then, repeatedly applying $\gamma$, we see that each $u_i$ is adjacent to a vertex $w_i$ and $w_i\not\in \{v_i,u_i:i\in\Z_6\}$.  Relabeling if necessary, there are vertices $w_0,\ldots,w_5$, none of which are in $\{v_i,u_i:i\in\Z_6\}$, and $u_iw_i\in E(\Gamma)$.  We have now identified $18$ vertices of $\Gamma$.  Each $v_i$ has valency $3$, each $u_i$ has valency $3$, and each $w_i$ has valency $1$.  All edges except for the edges $u_iw_i$, $i\in\Z_6$ are contained in cycles of length $6$.  This is the graph in Figure \ref{subgraph of order 18}, which for convenience we will call $\Omega$.

\begin{figure}[htbp]
\begin{center}
\includegraphics[scale=0.5]{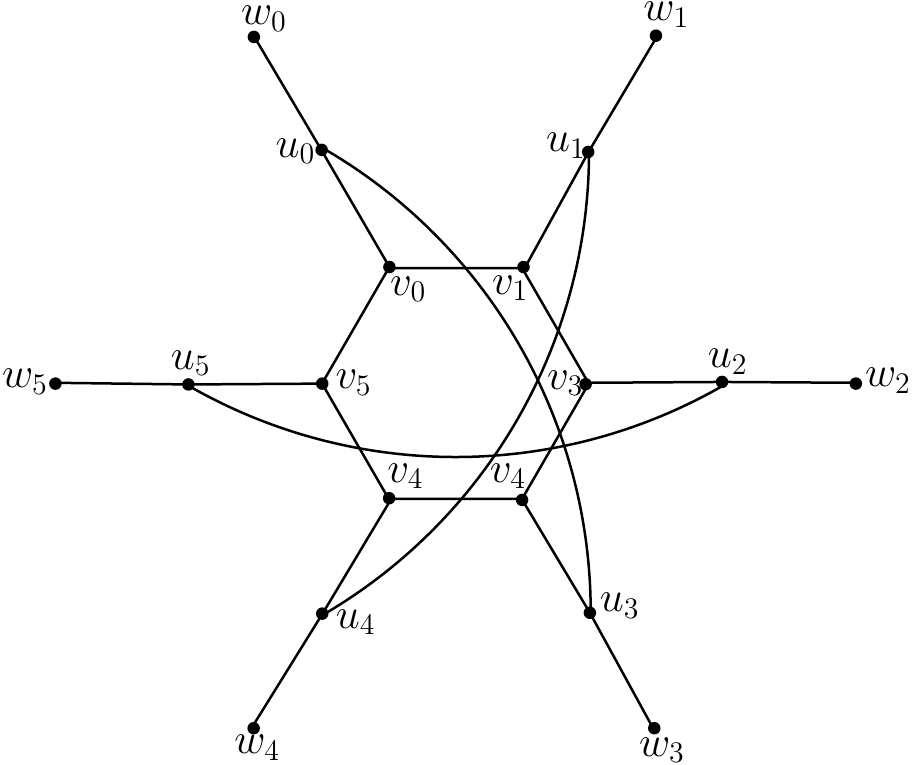}
\caption{A subgraph of $\Gamma$.}\label{subgraph of order 18}
\end{center}
\end{figure}

Consider the $3$-arc $A = u_0v_0v_1u_1$ in $\Gamma$.  In order for $A$ to be contained in a $6$-cycle $C'$ of $\Gamma$, some neighbor of $u_0$ not on $A$ must be adjacent to some neighbor of $u_1$ not on $A$ in $\Gamma$.  The neighbors of $u_0$ and $u_1$ not on $A$ in $\Gamma$ are all contained in $\Omega$, and are the vertices $w_0,w_1, u_3$, and $u_4$.  In $\Omega$, these vertices form an independent set, so $C'$ must contain an edge not in $\Omega$, namely, $w_0w_1$.  Applying $\gamma$ to the edge $w_0w_1$, we see that $w_iw_{i+1}\in E(\Gamma)$, and we obtain the cubic graph $\Lambda$ in Figure \ref{order 18}. 

%If $w_0$ is adjacent to some vertex in $\{v_i,u_i,w_i:i\in\Z_6\}$ other than $u_0$, then $w_0$ must be adjacent to some other $w_i$, $i\in\Z_6$.  As the edge $u_0w_0$ is contained in a $6$-cycle, it must be the case that $w_0w_1$ or $w_0w_5$ are edges of $\Gamma$.  But repeatedly applying $\gamma$, we see that $w_iw_{i+1}\in E(\Gamma)$ for every $i\in\Z_6$.  Then every vertex of $\Gamma$ has valency $3$, and $\Gamma$ has been determined.  It is the graph in Figure \ref{order 18}.  While it is reminiscent of the Pappus graph, this graph is not the Pappus graph as according to Magma \cite{MAGMA} the graph has automorphism group of order $12$.

\begin{figure}[ht]
\begin{center}

\includegraphics[scale=0.6]{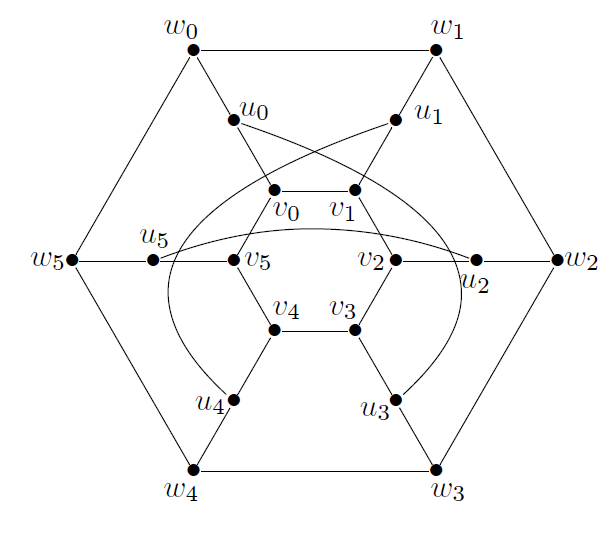}

\caption{A graph of order $18$.}

\label{order 18}
\end{center}
\end{figure}

%If $w_0$ is not adjacent to some vertex in $\{v_i,u_i,w_i:i\in\Z_6\}$, we see $w_0$ is adjacent to two vertices $x_{0,0}$ and $x_{0,1}$ not contained in $\{v_i,u_i,w_i:i\in\Z_6\}$.  Applying $\gamma$ multiple times we see each $w_i$ is adjacent to two vertices $x_{i,0}$ and $x_{i,1}$ not contained in $\{v_i,u_i,w_i:i\in\Z_6\}$.  %It is now not possible for any of the edges $u_iw_i$ to be contained in a cycle of length $6$, a contradiction.  

Consider the $3$-arc $v_5u_5u_2w_2$ in $\Lambda$.  In order to be contained in a cycle in $\Lambda$, the neighbors $v_0,v_4$ of $v_5$ and the neighbors $w_1$ and $w_3$ of $w_2$ in $\Lambda$ must have a common neighbor.  However, $\{v_0,v_4,w_1,w_3\}$ is an independent set in $\Lambda$.  The result follows.
%
%There are no other possibilities for the neighbors of $w_0$, and the result follows.
\end{proof}

The graph in Figure \ref{order 18} is reminiscent of the Pappus graph, but according to Magma \cite{MAGMA} has automorphism group of order $24$, and was constructed to contain a consistent cycle.  Thus it is an example of a cubic graph of girth $6$ admitting a consistent cycle with every edge being contained in a girth cycle but which is not vertex-transitive.

\begin{lemma}\label{12 edges}
Let $\Gamma$ be a cubic graph of girth $6$ (finite or infinite) that satisfies the following conditions:
\begin{enumerate} 
\item $\Gamma$ has a consistent girth cycle,  
\item every $3$-arc is contained in a $6$-cycle, and
\item the subgraph $\Delta$ of $\Gamma$ induced by a consistent cycle and its neighbors has $12$ edges.  
\end{enumerate} 
Then $\Gamma$ is either the Pappus graph or the Desargues graph. 
\end{lemma}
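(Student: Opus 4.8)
The plan is to reconstruct $\Gamma$ outward from the consistent cycle, exactly as in the proof of Lemma~\ref{more than 12 edges}, and then to let hypotheses (2) and (3) force the graph to close into one of the two named graphs. First I set up the base graph of Figure~\ref{base graph}: write $C=v_0v_1\cdots v_5v_0$ for the consistent girth cycle and fix $\gamma\in\Aut(\Gamma)$ with $\gamma(v_i)=v_{i+1}$ (indices mod $6$). Since $\Gamma$ is cubic, each $v_i$ has a unique off-cycle neighbour $u_i$, with $\gamma(u_i)=u_{i+1}$, and girth $6$ makes the $u_i$ pairwise distinct. The hypothesis that $\Delta$ has exactly $12$ edges means that there are no edges among the $u_i$, so each $u_i$ has precisely two further neighbours in the ``second layer'' outside $\{v_j,u_j:j\in\Z_6\}$.

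Next I record the local constraints and organise the case analysis. A short distance computation in the base graph shows that a second-layer neighbour of $u_0$ may, among the $u_j$, be additionally adjacent only to $u_2,u_3$ or $u_4$ (adjacency to $u_1$ or $u_5$ yields a $5$-cycle), and that it can meet at most three of the $u_j$, in which case these are exactly $u_0,u_2,u_4$ (any other triple contains two $u$'s at distance $3$, hence a $5$-cycle). I therefore classify each second-layer vertex by its \emph{$u$-degree} in $\{1,2,3\}$ and split according to the maximum value attained, propagating every forced adjacency by $\gamma$ and checking at each step that no new $4$- or $5$-cycle appears. The engine driving the argument is condition (2) applied to the outgoing $3$-arcs $w\,u_i\,v_i\,v_{i+1}$ and their mirrors $w\,u_i\,v_i\,v_{i-1}$, where $w$ is a second-layer neighbour of $u_i$: a $6$-cycle through such an arc forces either $w\sim u_{i\pm2}$ or an edge from $w$ to a second-layer neighbour of $u_{i\pm1}$. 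This dichotomy is exactly what rules out naive extensions; in particular it kills the configuration in which every second-layer vertex has $u$-degree $1$, since then a $3$-arc running out from some $u_i$ and along a second-layer cycle, such as $u_i\,p_i\,p_{i+1}\,p_{i+2}$, lies in no $6$-cycle.

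Carrying out the surviving cases then yields the two graphs. If some second-layer vertex is a \emph{hub} $h\sim u_0,u_2,u_4$, then $h':=\gamma(h)\sim u_1,u_3,u_5$, a $4$-cycle argument shows the remaining neighbour $z_i$ of each $u_i$ has $u$-degree $1$ while $h,h'$ are already cubic, and condition (2) on $z_i\,u_i\,v_i\,v_{i\pm1}$ forces $z_i\sim z_{i\pm1}$; the $z_i$ thus form a $6$-cycle and one obtains a cubic graph on $20$ vertices, which is the Desargues graph $\mathrm{GP}(10,3)$. If instead the maximum $u$-degree is $2$ and is realised by a distance-two vertex $w_0\sim u_0,u_2$, then $\gamma$ gives $w_i\sim u_i,u_{i+2}$, every $u_i$ becomes cubic with second layer exactly $\{w_0,\dots,w_5\}$, and condition (2) forces $w_i\sim w_{i+3}$ (the alternatives $w_i\sim w_{i\pm1}$ would over-saturate $w_i$), producing a cubic graph on $18$ vertices, which is the Pappus graph. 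The remaining possibilities --- the all-$u$-degree-$1$ case just excluded, and the ``antipodal'' case $w\sim u_0,u_3$, which collapses under $\gamma$ to three vertices $x_i\sim u_i,u_{i+3}$ admitting no girth-$6$, $\gamma$-invariant cubic completion compatible with (2) --- are eliminated.

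The main obstacle is not any individual case but controlling the branching while genuinely forcing finiteness: because $\Gamma$ is allowed to be infinite, one cannot conclude by a vertex count but must show that condition (2) keeps propagating forced adjacencies until every vertex is cubic and the graph closes. The two delicate points are verifying, at each forced edge, that girth $6$ is preserved, and disposing of the antipodal and $u$-degree-$1$ configurations, where condition (2) has to be applied to several $3$-arcs at once to reach a contradiction.
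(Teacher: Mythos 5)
Your overall strategy is the same as the paper's: grow $\Gamma$ outward from the consistent cycle, use $\gamma$-equivariance together with condition (2) to force edges, and split into cases according to how a second-layer vertex meets $U=\{u_0,\dots,u_5\}$. (The paper organizes the identical cases by the length of the cycle of $\gamma$ containing $w_0$; lengths $12$, $6$, $4$, $3$, $2$ correspond respectively to your $u$-degree pattern $\{u_0\}$, the pattern $\{u_0,u_2\}$, a triple forcing a $4$-cycle, the antipodal pattern $\{u_0,u_3\}$, and the hub $\{u_0,u_2,u_4\}$. Your identifications of the hub case with the Desargues graph and of the $\{u_0,u_2\}$ case with the Pappus graph agree with the paper's.) However, as written the proposal has genuine gaps in the elimination steps, so it is an outline rather than a proof.

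Concretely: (a) In the hub case, the claim that ``a $4$-cycle argument shows the remaining neighbour $z_i$ of each $u_i$ has $u$-degree $1$'' is false as stated. A $4$-cycle through the hub $h$ only excludes $z_0\sim u_2$ and $z_0\sim u_4$; the adjacency $z_0\sim u_3$ creates no $4$- or $5$-cycle, since $u_0$ and $u_3$ have no common neighbour besides $z_0$ in the configuration built so far. One must invoke condition (2) again: the $3$-arc $z_0u_0v_0v_1$ lies on no $6$-cycle when $z_0\sim u_0,u_3$, because closing it through $u_1$ forces $z_0\sim z_1$ and hence (applying $\gamma$ and using $\gamma^3(z_0)=z_0$) the triangle $z_0z_1z_2$, while closing it through $v_2$ forces $z_0\sim u_2$ and the $4$-cycle $z_0u_0hu_2z_0$. (b) The antipodal case $x_0\sim u_0,u_3$ is the longest and most delicate part of the paper's proof: one must introduce a second $\gamma$-orbit of six further neighbours $w_i'$, rule out shorter orbit lengths for it, show there are no edges between $\{x_0,x_1,x_2\}$ and $\{w_i'\}$, and only then exhibit a $3$-arc such as $x_0u_0v_0v_1$ on no $6$-cycle. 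Dismissing this case with ``admitting no completion compatible with (2)'' is an assertion, not an argument. (c) Your stated reason for killing the all-$u$-degree-$1$ case (a $3$-arc $u_ip_ip_{i+1}p_{i+2}$ along a ``second-layer cycle'') presupposes second-layer adjacencies that have not been established; the correct killing arc is $u_0v_0v_1v_2$, whose completing $6$-cycle would require a second-layer neighbour of $u_0$ adjacent to $u_2$ or to $v_3$, both impossible in that case. All three gaps are fillable, and filling them essentially reproduces the paper's proof, but in its present form the case analysis is not complete.
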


\begin{proof}
Let $D = v_0v_1v_2v_3v_4v_5v_0$ be a consistent $6$-cycle in $\Gamma$ with $\gamma\in\Aut(\Gamma)$ such that $\gamma(v_i) = v_{i + 1}$.  For each $v_i$ there is $u_i\in \Gamma$ such that $u_iv_i\in E(\Gamma)$, $i\in\Z_6$. Let $U = \{u_i:i\in\Z_6\}$, $V= \{v_i:i\in\Z_6\}$, and $W = \{w\in V(\Gamma):wu_j\in E(\Gamma){\rm\ and\ }w\not = v_k,j,k\in\Z_6\}$.
By hypothesis, the subgraph $\Delta$ of $\Gamma$ induced by $U\cup V$ is the base graph in Figure \ref{base graph}.

By hypothesis, the $3$-arc $u_0v_0v_1v_2$ must be contained in a $6$-cycle $C$. We must also have a neighbor of $u_0$ which is not $v_0$ on $C$, which we will call $w_0$.  As $\Delta$ has $12$ edges we must have that $w_0\not\in V(\Delta)$.

Suppose $\gamma^i(w_0) = w_0$ for some $i\in\Z^+$.  As $u_0w_0\in E(\Gamma)$, $u_iw_0\in E(\Gamma)$, where $u_i = \gamma^i(u_0)\in U$.  As $\Gamma$ has girth $6$, we see that $u_i = u_0,u_2,u_3$, or $u_4$.  Set $w_1 = \gamma(w_0)$.  As $\Gamma$ has girth $6$, $\gamma(v_0) = v_1$, and $\gamma(u_0) = u_1$, $w_1\not = w_0$.  Let $x\in\{2,3,4\}$. 

Let $i\equiv x\ ({\rm mod}\ 6)$.  As $\gamma^i(u_0) = u_x$, we see that $w_0u_x\in E(\Gamma)$ as $\gamma^i(w_0u_0) = w_0u_x\in E(\Gamma)$.  Let $\xi$ be the cycle of $\gamma$ that contains $w_0$.  As $\gamma$ is a $6$-cycle on the vertices in $U$, we see that $\gamma^6$ fixes each vertex of $U$, and the orbit of $\xi^6$ that contains $w_0$ has length $\ell$.  As $\Gamma$ has valency $3$ and $w_0$ is adjacent to a vertex in $U$, we conclude that $\ell \le 3$.  As $u_i$ is adjacent to two vertices that are not in $V$, $\ell\le 2$.  If $w\in W$ and $\gamma(w) = w$, then as $\gamma$ is a $6$-cycle on $U$, we see that $w$ has valency at least $6$, a contradiction.  Hence no element of $W$ is fixed by $\gamma$.  Thus $\xi^6 = 1$ or is a transposition.  This gives that $\xi$ has length $2$, $3$, $4$, $6$, or $12$.

If $\xi$ has length $12$, then $\vert W\vert = 12$, and each $u_j$ is adjacent to exactly two elements of $W$.  Let $w_{j,0}$ and $w_{j,1}$ be the neighbors of $u_j$ in $\Gamma$, $j\in\Z_6$.  The cycle $C$ containing $u_0,v_0,v_1,v_2$ must contain the edge $v_2u_2$ or $v_2v_3$.  Also observe that every neighbor of every vertex in $U\cup V$ has been determined.
If $v_2u_2$ is contained in $C$, then $u_2$ has to be adjacent with one of $w_{0,0}$ or $w_{0,1}$, which is impossible. Similarly, if $C$ contains $v_2v_3$ then $v_3$ has to be adjacent with one of $w_{0,0}$ or $w_{0,1}$, which is also impossible.

If $\xi$ has length $4$, then write $\xi = (w_0,w_1,w_2,w_3)$.  Hence $u_jw_j\in E(\Gamma)$ for $0\le j\le 3$.  Also, $\gamma^4$ has as one of its cycles $(u_0,u_4,u_2)$ and fixes both $w_0$ and $w_2$.  We conclude that both $w_0$ and $w_2$ are adjacent to $u_0,u_2$, and $u_4$.  Thus $w_0u_0w_2u_2w_0$ is a $4$-cycle in $\Gamma$.  So $\gamma$ has no cycle of length $4$ that contains a vertex adjacent to a vertex of $U$.

Suppose $\xi$ has length $2$.  For reasons which will be clear later, we wish to replace the labellings of $w_0$ and $w_1$ with $x_0$ and $x_1$, respectively (and we will reuse the symbols $w_0$ and $w_1$ later).  Then the induced action of $\gamma$ on $\{x_0,x_1\}$ is the transposition $(x_0,x_1)$, and the edges $x_1u_3,u_4x_0,u_5x_1$ are also contained in $E(\Gamma)$.  By hypothesis, the $3$-arc $u_5v_5v_0u_0$ is contained in a $6$-cycle, call it $C'$.  If $u_0x_0\in E(C')$, then $x_0$ and $u_5$ must have a common neighbor.  As $x_0$ has valency $3$ in the subgraph of $\Gamma$ we have now labeled, this common neighbor must be $u_4$ or $u_2$.  If this common neighbor is $u_4$, then $u_2u_4\in E(\Gamma)$, and $\Delta$ has $13$ edges, which is not possible.  If it is $u_2$, then the edge $u_2u_5\in E(\Gamma)$ and again $\Delta$ has $13$ edges.  Thus $u_0x_0\not\in E(C')$, and so $u_0$ is adjacent to some vertex other than $x_1$ and $v_0$, which we will call $w_0$.  Note that $w_0$ cannot be $x_0$ or $x_1$ as we know all of their neighbors in $\Gamma$, and it cannot be in $V(\Delta)$ as $\Delta$ would then have $13$ edges.  Also, $w_0$ and $u_5$ must have a common neighbor, call it $w_5$, in order for $C'$ to exist.  As $w_5$ is a neighbor of $u_5$, $w_5$ is not in $V(\Delta)$ as otherwise $\Delta$ would have $13$ edges.  Thus $w_5\not \in U\cup V$, and it is not $x_0$ or $x_1$ as we know their neighbors.  So $w_5$ is a vertex that we had not previously labeled.

Let $w_1 = \gamma(w_0)$.  As $\gamma$ cannot fix a vertex that is a neighbor of a vertex that is in $U$, $w_1\not = w_0$.  As $\gamma$ is a permutation, $w_1\not\in U\cup V\cup\{x_0,x_1\}$.  Also, we see that $w_1\not = w_5$ as otherwise $\gamma(u_0w_0) = u_1w_5$ and $u_5w_5u_1x_1u_5$ is a $4$-cycle in $\Gamma$.  Thus $w_1$ is not a vertex that we have previously labeled.  The $3$-arc $u_0v_0v_1u_1$ is contained in a $6$-cycle in $\Gamma$, so some neighbor of $u_0$ is adjacent to some neighbor of $u_1$.  As we know all the neighbors of $x_0,v_0,v_1$ and $x_1$, the only possibility is that $w_0w_1\in E(\Gamma)$.  Let $w_2 = \gamma(w_1)$.  We know by previous arguments that $w_2\not\in U\cup V\cup\{x_0,x_1\}$.  Also, $w_2\not = w_1$, and $w_2$ is a neighbor of $u_2$.  As we already know the neighbors of $w_0$, $w_2\not = w_0$.  Finally, if $w_2 = w_5$, the edge $\gamma(w_0w_1) = w_1w_5$ implies that the $3$-cycle $w_5w_0w_1w_5$ is in $\Gamma$.  So $w_2$ is a vertex we have not previously labeled, and $w_1w_2\in E(\Gamma)$.  Using previously seen arguments, the vertex $w_3 = \gamma(w_2)$ is either a vertex that we have not previously labeled, or it is $w_5$ and $\Gamma$ contains the $4$-cycle $w_5w_0w_1w_2$.  Thus $w_3$ is not a vertex we have previously labeled and $w_2w_3\in E(\Gamma)$.  An analogous argument shows that $\gamma(w_3) = w_4$ is a vertex that we have not previously labeled and $w_3w_4\in E(\Gamma)$.  Finally, $\gamma(w_4)$ is a neighbor of $u_5$ in $\Gamma$, and these are $w_5,x_1$ and $v_5$.  As we know all of the neighbors of $x_1$ and $v_5$, we see that $\gamma(w_4) = w_5$ and so $\Gamma$ is determined as every vertex has valency $3$.  It is the graph on the left side of Figure \ref{Desaurges graph}, and on the right side is a standard drawing of the Desargues graph.  We see that $\Gamma$ is isomorphic to the Desaurges graph.  We may now assume that $\gamma$ has no cycle of length $2$ that is adjacent to a vertex of $U$.  

\begin{figure}[ht]
\begin{center}
\includegraphics[scale=0.6]{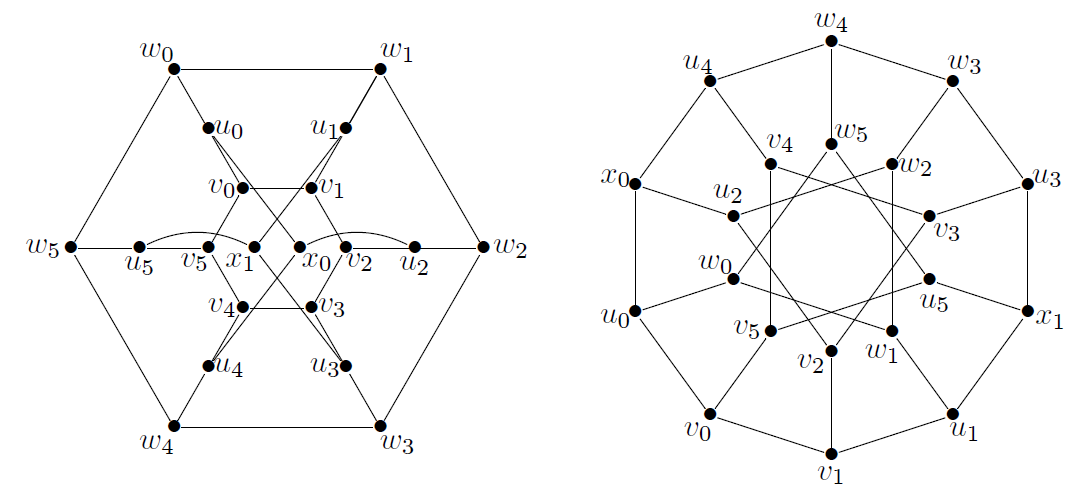}

\caption{The Desargues graph.}

\label{Desaurges graph}
\end{center}
\end{figure}

Suppose $\xi$ has length $3$.  In the cycle decomposition of $\gamma$, we now have that the cycle that contains $w_0$ is a $3$-cycle, and so there is $w_0\not = w_2\not = w_1$ that is a neighbor of $u_2$ in $\Gamma$ that is not $v_2$.  Hence $w_1u_4,w_2u_5\in E(\Gamma)$.  Thus the induced action of $\gamma$ on $U\cup V\cup\{w_0,w_1,w_2\}$ is $$(v_0,v_1,v_2,v_3,v_4,v_5)(u_0,u_1,u_2,u_3,u_4,u_5)(w_0,w_1,w_2).$$
The $3$-arc $u_0v_0v_1u_1$ is contained in a $6$-cycle in $\Gamma$.  Observe that this $6$-cycle cannot be $w_0u_0v_0v_1u_1w_1w_0$ as then $w_0w_1,w_1w_2$, and $w_2w_0\in E(\Gamma)$ and $w_0$ has valency $4$.  Thus $u_0$ is adjacent to some vertex $w_0'$ that is not in $U\cup V\cup\{w_0,w_1,w_2\}$.  Set $w_1' = \gamma(w_0')$.  As $\gamma$ is a permutation, $w_1'$ is a vertex that we have not previously labeled.  By arguments above, we may assume that the cycle $\xi'$ of $\gamma$ that contains $w_0'$ is not a transposition, and so $\gamma(w_1') = w_2'$ and $w_2'$ is not a vertex we have previously labeled.  If $\xi'$ is a $3$-cycle and $\gamma(w_2') = w_0'$, then $\gamma(u_2w_2') = w_0'u_3\in E(\Gamma)$ and $u_3w_0u_0w_0'u_3$ is a a $4$-cycle in $\gamma$.  So $\gamma(w_2') = w_3'$, a vertex we have not previously labeled.  By arguments above, $\xi'$, permuting a vertex adjacent to a vertex in $U$, must have length $6$. 

Now observe that in $\gamma^3$, the vertices $w_0,w_1$, and $w_2$ are fixed, while $\gamma^3$ is a product of three transpositions on $W' = \{w_i':i\in\Z_6\}$.  We conclude that if there is some edge between a vertex in $\{w_0,w_1,w_2\}$ and a vertex of $W'$, then there are two vertices of $W'$ adjacent to a vertex in $\{w_0,w_1,w_2\}$.  This is not possible, as each $w_i$ is adjacent to two vertices in $U$.  Hence no vertex of $\{w_0,w_1,w_2\}$ is adjacent to any vertex of $W'$.  By hypothesis, the $3$-arc $w_0u_0v_0v_1$ is contained in a $6$-cycle $C'$.  Then either $v_1u_1\in V(C')$ or $v_1v_2\in E(C')$.  If $v_1u_1\in E(C')$, then $w_0$ and $u_1$ must have a common neighbor, but as there are no edges in $\Gamma$ between $\{w_0,w_1,w_2\}$ and $W'$, there is no such edge.  Similarly, if $v_1v_2\in E(C')$, then $w_0$ and $v_2$ must have a common neighbor, which they also do not.  Thus $\xi$ cannot have length $3$.  The only remaining possibility is that every cycle of $\gamma$ that contains a vertex adjacent to a vertex of $u$ has length $6$.

If $\gamma$ has two orbits of size $6$ on vertices of $W$, then we see that the $3$-arc $u_0v_0v_1v_2$ does not lie on $6$-cycle in the same way as when $\xi$ had length 12. Hence we conclude that $W$ contains $6$ vertices, and that $\gamma$ induces a $6$-cycle on these $6$ vertices.  Let $\xi=(w_0,w_1,w_2,w_3,w_4,w_5)$ where $u_iw_i$ is an edge in $\Gamma$. 
It is now easy to see that the only way that the $3$-arc $u_0v_0v_1v_2$ lies on $6$-cycle is to have that $w_0u_2$ or $w_2u_0$ is an edge of $\Gamma$. 
It follows that $w_iu_{i+2}$ ($i\in \mathbb{Z}_6$) are edges of $\Gamma$ or $w_iu_{i-2}$ ($i\in \mathbb{Z}_6$) are edges of $\Gamma$. As the second option can be obtained from the first by reflection (replacing index $i$ in each vertex with $-i$), without loss of generality we will assume that  $w_iu_{i+2}$ ($i\in \mathbb{Z}_6$) are edges of $\Gamma$.

\begin{figure}[ht]
\begin{center}
\includegraphics[scale=0.6]{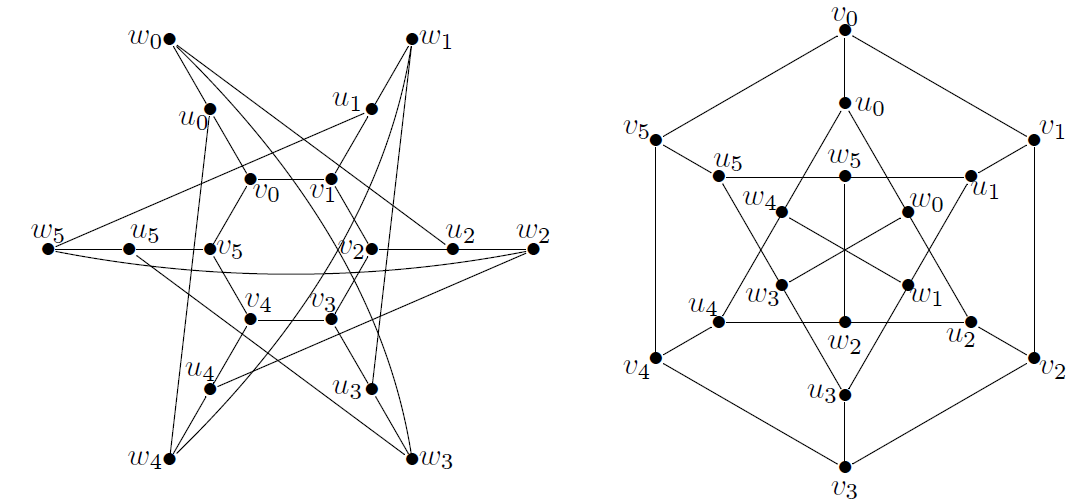}

\caption{The Pappus graph.}

\label{Pappus graph}
\end{center}

\end{figure}

Since the  $3$-arc $u_0v_0v_1u_1$ lies on a $6$-cycle, it follows that there is an edge between some of the neighbors of $u_0$ and $u_1$ different from $v_0$ and $v_1$. It follows that there is at least one edge between vertices in $W$. Since $\gamma$ induces a cycle of length $6$ on $W$, applying $\gamma$, it follows that each vertex of $W$ has a neighbor in $W$. It is now easy to see that the only possibility is that $w_iw_{i+3}$ is an edge of $\Gamma$, as otherwise each vertex of $W$ would have two neighbors in $W$ and two neighbors in $U$, contradicting the assumption that $\Gamma$ is cubic.
Since $\Gamma$ is connected, it follows that $\Gamma$ is of order 18, and it can be seen that it is isomorphic to the Pappus graph (see left hand side of Figure \ref{Pappus graph}).
This concludes the proof.
\end{proof}

The next result is a combination of Lemmas \ref{more than 12 edges} and \ref{12 edges}

\begin{theorem}\label{2.5 and 2.6}
Let $\Gamma$ be a cubic graph of girth $6$ (finite or infinite) that contains a consistent girth cycle and such that every $3$-arc of $\Gamma$ is contained in a $6$-cycle.  Then $\Gamma$ is the Heawood graph, the Pappus graph, or the Desaurges graph.
\end{theorem}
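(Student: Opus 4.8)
The plan is to reduce the theorem to the two preceding lemmas by a clean case analysis on the number of edges of the subgraph $\Delta$ induced by a consistent girth cycle together with its neighbors. First I would fix a consistent $6$-cycle $C = v_0v_1\ldots v_5v_0$ with $\gamma\in\Aut(\Gamma)$ satisfying $\gamma(v_i)=v_{i+1}$, and record that, since $\Gamma$ is cubic, each $v_i$ has a neighbor $u_i\notin C$. The key preliminary observation is that the $u_i$ are forced to be pairwise distinct and distinct from the $v_j$: if $u_i=u_j$ for $i\neq j$, then the path $v_iu_iv_j$ together with the shorter arc of $C$ between $v_i$ and $v_j$ would close up into a cycle of length at most $3+2=5$, contradicting girth $6$ (the distance between two vertices on a $6$-cycle is at most $3$), and a chord $u_i=v_j$ would likewise produce a cycle shorter than $6$. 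Consequently the base graph of Figure~\ref{base graph} is genuinely a subgraph of $\Gamma$ on $12$ vertices, and $\Delta$ always has at least $12$ edges.

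Next I would check that the hypothesis of the theorem supplies the hypotheses required by both lemmas. The assumption that every $3$-arc lies on a $6$-cycle immediately implies that every edge lies on a $6$-cycle: in a cubic graph of girth $6$ any edge $xy$ extends to a genuine $3$-arc by choosing a neighbor of $x$ other than $y$ and a neighbor of $y$ other than $x$, and a $6$-cycle through that $3$-arc contains $xy$. Thus both Lemma~\ref{more than 12 edges} and Lemma~\ref{12 edges} are applicable whenever their respective edge-count condition on $\Delta$ is met.

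Now I would split into the two exhaustive cases. If $\Delta$ has exactly $12$ edges, then Lemma~\ref{12 edges} applies directly and yields that $\Gamma$ is the Pappus graph or the Desargues graph. If $\Delta$ has more than $12$ edges, then Lemma~\ref{more than 12 edges} applies; since the theorem additionally assumes that every $3$-arc lies on a $6$-cycle, the stronger conclusion of that lemma forces $\Gamma$ to be the Heawood graph. Because $\Delta$ has at least $12$ edges by the opening observation, these two cases cover all possibilities, and the theorem follows.

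I do not expect a genuine obstacle here, as the statement is explicitly billed as a combination of the two lemmas; the only points requiring care are confirming the dichotomy is exhaustive (i.e.\ that $\Delta$ cannot have fewer than $12$ edges) and that the $3$-arc hypothesis is strong enough both to invoke the edge-condition of Lemma~\ref{more than 12 edges} and to trigger its Heawood conclusion rather than merely placing $\Gamma$ inside the order-$18$ subgraph $\Omega$.
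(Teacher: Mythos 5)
Your proposal is correct and follows exactly the route the paper intends: the paper offers no written proof beyond the remark that the theorem ``is a combination of Lemmas~\ref{more than 12 edges} and~\ref{12 edges}'', and your argument simply supplies the omitted details (that girth $6$ forces the $u_i$ to be distinct so that $\Delta$ has at least $12$ edges, making the dichotomy exhaustive, and that the $3$-arc hypothesis implies every edge lies on a $6$-cycle so both lemmas apply). No gaps.
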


We next turn our attention to which cubic $s$-arc-transitive graphs contain consistent cycles.

\begin{lemma}\label{4-arc transitive}
Let $s\geq 3$. Every cubic $s$-arc-transitive graph $\Gamma$ of girth $s+2$ contains a consistent cycle.
\end{lemma}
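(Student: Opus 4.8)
The plan is to build the consistent cycle directly from a girth cycle using $s$-arc-transitivity, and then to verify that the automorphism supplied by arc-transitivity genuinely rotates the whole cycle rather than peeling off of it.

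First I would fix a shortest cycle $C = v_0 v_1 \cdots v_{s+1} v_0$, which has length $g = s+2$. Reading along $C$, the segment $v_0 v_1 \cdots v_s$ is a path with $s$ edges and no doubled-back vertices (since $g \geq 5$ forces $v_i \neq v_{i+2}$), hence it is a genuine $s$-arc; likewise $v_1 v_2 \cdots v_{s+1}$ is an $s$-arc. Because $\Gamma$ is $s$-arc-transitive, there is $\gamma \in \Aut(\Gamma)$ with $\gamma(v_i) = v_{i+1}$ for $0 \leq i \leq s$. The claim to prove is that $\gamma$ closes the loop, i.e.\ $\gamma(v_{s+1}) = v_0$; once this holds, $\gamma$ is a one-step rotation of $C$ and so $C$ is consistent by definition.

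The heart of the argument is to pin down $\gamma(v_{s+1})$. Since $\gamma(v_s) = v_{s+1}$, the image $\gamma(v_{s+1})$ must be a neighbor of $v_{s+1}$. As $C$ is a girth cycle it has no chords (a chord would split $C$ into two strictly shorter cycles, impossible at girth $g$), so the three neighbors of $v_{s+1}$ are exactly $v_s$, $v_0$, and a single off-cycle vertex $u \notin \{v_0,\ldots,v_{s+1}\}$. I can discard $\gamma(v_{s+1}) = v_s$ immediately, because $\gamma(v_{s-1}) = v_s$ already and $\gamma$ is injective with $v_{s-1} \neq v_{s+1}$. The remaining, and crucial, case to eliminate is $\gamma(v_{s+1}) = u$.

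The step I expect to be the crux is ruling out $\gamma(v_{s+1}) = u$, and the key trick is that $\gamma(C)$ is again a cycle. Applying $\gamma$ to the closing edge $v_{s+1} v_0$ of $C$ shows that $u\,v_1 = \gamma(v_{s+1})\gamma(v_0)$ is an edge of $\Gamma$. But $u$ is also adjacent to $v_{s+1}$, and $C$ supplies the edges $v_{s+1} v_0$ and $v_0 v_1$; since $u, v_0, v_1, v_{s+1}$ are four distinct vertices ($u$ being off the cycle), this yields the $4$-cycle $v_0 v_1 u\, v_{s+1} v_0$, contradicting $g = s+2 \geq 5$. Hence $\gamma(v_{s+1}) = v_0$, so $\gamma$ rotates $C$ by one step and $C$ is consistent. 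This is precisely where the hypothesis $s \geq 3$ enters: it guarantees $g \geq 5$, so that a $4$-cycle is forbidden.
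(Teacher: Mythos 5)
Your proof is correct and follows essentially the same route as the paper's: take a girth cycle, use $s$-arc-transitivity to get $\gamma$ shifting $v_0\cdots v_s$ to $v_1\cdots v_{s+1}$, and rule out $\gamma(v_{s+1})\neq v_0$ by producing a forbidden $4$-cycle $v_{s+1}\,u\,v_1\,v_0\,v_{s+1}$. Your version is in fact slightly more careful than the paper's in explicitly enumerating the three possible images of $v_{s+1}$ and discarding $v_s$ by injectivity.
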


\begin{proof}
Let $\Gamma$ be an $s$-arc-transitive graph of girth $s+2$, and let $A = v_0,v_1,\dots,v_s$ be an $s$-arc in $\Gamma$.  As $\Gamma$ is $s$-arc-transitive, has girth $s+2$, and an $(s+2)$-cycle contains an $s$-arc, we see that $A$ is contained in an $(s+2)$-cycle $C = v_0v_1v_2\ldots v_{s+1}v_0$.  Then there exists $\gamma\in\Aut(\Gamma)$ such that $\gamma(A) = v_1,v_2,\ldots,v_s,v_{s+1}$.  Hence $\gamma(v_i) = v_{i + 1}$ for $0\le i\le s$.  If $\gamma(v_{s+1}) = v_0$, then $C$ is a consistent cycle in $\Gamma$.  Otherwise, $\gamma(v_{s+1}) = x\not\in V(C)$.  Then $\gamma(v_{s+1}v_0) = xv_1$, and $v_{s+1}xv_1v_0v_{s+1}$ is a $4$-cycle in $\Gamma$, a contradiction with the girth of $\Gamma$ being equal to $s+2\geq 5$.  Hence $\Gamma$ contains a consistent cycle.
\end{proof}

\begin{lemma}\label{consistent cycle}
Every $3$-arc-transitive graph of girth $6$ has a consistent $6$-cycle.
\end{lemma}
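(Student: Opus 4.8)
The plan is to exhibit the consistent cycle as a girth cycle that a single automorphism rotates by one step, in the spirit of the proof of Lemma~\ref{4-arc transitive}, while compensating for the fact that a $3$-arc fixes only four consecutive vertices of a $6$-cycle rather than the five that $4$-arc-transitivity would provide. First I would record that every $3$-arc of $\Gamma$ lies on a $6$-cycle: a $6$-cycle exists since the girth is $6$, it contains a $3$-arc, and by $3$-arc-transitivity every $3$-arc is an image of that one and hence lies on a $6$-cycle. I would then fix a $6$-cycle $C=v_0v_1\cdots v_5v_0$ and, using $3$-arc-transitivity, pick $\gamma\in\Aut(\Gamma)$ sending the $3$-arc $(v_0,v_1,v_2,v_3)$ to $(v_1,v_2,v_3,v_4)$, so that $\gamma(v_i)=v_{i+1}$ for $i\in\{0,1,2,3\}$. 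Writing $u_i$ for the third neighbour of $v_i$, a short girth check shows the twelve vertices $v_0,\dots,v_5,u_0,\dots,u_5$ are pairwise distinct, i.e.\ they span the base graph of Figure~\ref{base graph}; the goal is to prove that $\gamma$ rotates $C$ by one step.

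The decisive quantity is $\gamma(v_4)$. Since $\gamma(v_4)$ is a neighbour of $\gamma(v_3)=v_4$ and cannot equal $\gamma(v_2)=v_3$ by injectivity, it is either $v_5$ or the off-cycle neighbour $u_4$. In the first (good) case I would finish immediately: from $v_5\sim v_0$ we get $\gamma(v_5)\in N(\gamma(v_4))\cap N(\gamma(v_0))=N(v_5)\cap N(v_1)$, and among the twelve distinct vertices the only common neighbour of $v_5$ and $v_1$ is $v_0$, so $\gamma(v_5)=v_0$. Hence $\gamma(v_i)=v_{i+1}$ for all $i\in\Z_6$, and $C$ is a consistent $6$-cycle.

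The main obstacle is therefore the second case $\gamma(v_4)=u_4$, and here the situation reduces cleanly: $\gamma(v_5)$ must lie in $N(u_4)\cap N(v_1)=N(u_4)\cap\{v_0,v_2,u_1\}$, and the options $v_0$ and $v_2$ are excluded because $v_0\sim u_4$ would produce the $4$-cycle $v_0u_4v_4v_5v_0$ while $v_2\sim u_4$ would produce the $4$-cycle $v_2u_4v_4v_3v_2$. This leaves only $\gamma(v_5)=u_1$ together with the ``antipodal chord'' $u_1\sim u_4$. Thus the whole difficulty concentrates in a single statement: in a $3$-arc-transitive girth-$6$ graph the base graph of a girth cycle cannot carry such an antipodal chord. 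This is exactly the place where having only $3$-arc-transitivity, rather than the $4$-arc-transitivity exploited in Lemma~\ref{4-arc transitive}, makes the argument delicate, since one shift automorphism no longer pins down the fifth vertex and, correspondingly, not every $6$-cycle need be consistent—the genuine task is to find one that is.

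To dispose of the chord I would use the hypothesis that every $3$-arc lies on a $6$-cycle together with the girth, propagating the chord and the spoke edges under $\gamma$ and $\gamma^{-1}$ and analysing the resulting local configuration by the same base-graph bookkeeping as in Lemmas~\ref{more than 12 edges} and~\ref{12 edges}; the aim is to show that the presence of $u_1u_4$ forces either a cycle of length less than $6$ or a contradiction with $C$ being a girth cycle, thereby returning us to the good case. I expect essentially all of the work to lie in this verification that the shift automorphism cannot carry $C$ off itself, while the reduction above isolates it to the single, concrete obstruction of the antipodal chord.
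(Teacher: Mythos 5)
Your setup and the reduction to the antipodal chord $u_1u_4$ are correct and essentially reproduce the Claim inside the paper's proof: an automorphism $\gamma$ shifting the $3$-arc $v_0v_1v_2v_3$ along $C$ either rotates $C$ by one step or forces the chord $u_1u_4$ (equivalently, a path of length $3$ between antipodal vertices of $C$ avoiding $E(C)$). The gap is in your final step. You propose to finish by showing that such a chord is impossible in a cubic $3$-arc-transitive graph of girth $6$, but that statement is false: the Heawood graph is $4$-arc-regular, hence $3$-arc-transitive, has girth $6$, and the subgraph induced by any of its girth hexagons together with their neighbours contains all three antipodal chords $u_iu_{i+3}$ (it has $15$ edges; this is exactly the configuration analysed in Lemma~\ref{more than 12 edges} and Figure~\ref{Heawood}). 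Moreover, by $4$-arc-transitivity there is an automorphism of the Heawood graph carrying the $4$-arc $v_0v_1v_2v_3v_4$ to the $4$-arc $v_1v_2v_3v_4u_4$, i.e.\ a $\gamma$ with $\gamma(v_i)=v_{i+1}$ for $i\le 3$ and $\gamma(v_4)=u_4$; so the ``bad case'' genuinely occurs and no amount of girth bookkeeping or propagation under $\gamma$ can contradict it. At bottom, your strategy attempts to prove that \emph{every} automorphism shifting the $3$-arc rotates $C$, which is strictly stronger than consistency (the existence of \emph{some} rotating automorphism) and is simply not true in the Heawood graph.

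The paper sidesteps this by arguing globally by contradiction: assume \emph{no} $6$-cycle of $\Gamma$ is consistent. Then the bad case occurs for every hexagon, so every hexagon carries all three antipodal chords $u_iu_{i+3}$. These chords create new hexagons such as $v_0v_1v_2u_2u_5v_5v_0$ and $v_0v_1u_1u_4v_4v_5v_0$, to which the same Claim applies; this forces two further vertices $x$ (adjacent to $u_0,u_2,u_4$) and $y$ (adjacent to $u_1,u_3,u_5$), whereupon the component is cubic of order $14$ and is identified as the Heawood graph. The contradiction is then not a short cycle but the fact that the Heawood graph, being $4$-arc-transitive of girth $6$, \emph{does} have a consistent $6$-cycle by Lemma~\ref{4-arc transitive}. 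If you want to salvage your approach, this is the pivot you need: do not try to kill the chord; instead show that the chord configuration, pushed through the new hexagons it creates, determines the Heawood graph, and then exhibit a consistent hexagon there.
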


\begin{proof}
Suppose that $\Gamma$ is a cubic $3$-arc-transitive graph of girth $6$ without a consistent $6$-cycle.

{\bf Claim:} Given any $6$-cycle of $\Gamma$, any two antipodal points of the $6$-cycle are endpoints of a path of length $3$ sharing no edge with the $6$-cycle.

Let $C_0=v_0v_1v_2v_3v_4v_5v_0$ be a cycle of length 6 in $\Gamma$.  As $\Gamma$ is $3$-arc-transitive, there exists an automorphism $\alpha$ mapping the $3$-arc $v_0v_1v_2v_3$ to $v_1v_2v_3v_4$.  If $\alpha(v_4)=v_5$, then either $\alpha(v_5) = v_0$ and $C_0$ is a consistent $6$-cycle in $\Gamma$, or $\alpha(v_5) = z\not\in V(C_0)$.  But then $\alpha(v_4v_5v_0) = v_5zv_1$ and $v_5zv_1v_0v_5$ is a $4$-cycle in $\Gamma$, a contradiction showing that  $\alpha(v_4)\not = v_5$.

Suppose now that $\alpha(v_4)=x\neq v_5$. Then $\alpha(\{v_3,v_4\}) = \{v_4,x\}\in E(\Gamma)$. Let $w=\alpha(v_5)$.   Note that as $\Gamma$ has girth $6$, $w\not\in V(C_0)$.  Then $\alpha(\{v_5, v_0\}) = \{w,v_1\}\in E(\Gamma)$ and $v_4xwv_1$ is the claimed path of length $3$, concluding the proof of the claim.

Denote by $u_i$ the neighbour of $v_i$ outside of $C_0$, for $i\in \Z_6$, and so we have the base graph in Figure \ref{base graph} as a subgraph of $\Gamma$.  By the above claim, it follows that $u_0u_3$, $u_1u_4$ and $u_2u_5$ are edges in $\Gamma$. Consider now the $6$-cycle $C_1=v_0v_1v_2u_2u_5v_5v_0$, and observe that $v_0,u_2$ is a pair of antipodal points of $C_1$.  By the Claim there exists a path of length $3$ between them sharing no edge with $C_1$.  It is clear that $u_0$ must be contained in this path, as it is the only neighbour of $v_0$ outside of $C_1$.   Let $x$ be the remaining vertex of this path.   Since $\Gamma$ is cubic of girth $6$ it follows that $x$ is different from all $v_i$ and $u_i$. Hence $x$ is adjacent with $u_0$ and $u_2$.

Consider now the $6$-cycle $C_2=v_0v_1u_1u_4v_4v_5v_0$ and observe that $v_0,u_4$ is a pair of antipodal points of $C_2$. Hence there is a path of length $3$ between $v_0$ and $u_4$ sharing no edge with $C_2$.  Observe that $v_0u_0$ is one edge of this path. Also, $u_3$ cannot be contained on this path, since otherwise we would get the $4$-cycle $v_3v_4u_4u_3v_3$.  It follows that $v_0u_0xu_4$ must be the path, that is $x$ is adjacent with $u_0,u_2$ and $u_4$.

Considering $6$-cycles $v_1v_2v_3u_3u_0v_0v_1$ and $v_3v_4v_5u_5u_2v_2v_3$, it follows that there is another vertex $y$ adjacent with $u_1,u_3$ and $u_5$.  Since the component $D$ of $\Gamma$ that contains $v_1$ is connected and cubic, it follows that $D$ is of order $14$.  From the construction of $D$ it follows that $D$ is the graph on the left hand side of Figure \ref{Heawood}, and so is isomorphic to the Heawood graph.  But the Heawood graph is $4$-arc-regular, and, as is well known has a consistent $6$-cycle (this also follows by Lemma \ref{4-arc transitive}).
\end{proof}

We now prove the main result of this section, and show there are no infinite $s$-arc-transitive graphs of girth $6$ for $s\ge 3$.

\begin{corollary}\label{final result}
Let $s\ge 3$.  There are no infinite cubic $s$-arc-transitive graphs of girth $6$.
\end{corollary}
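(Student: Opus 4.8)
The plan is to derive this as a short contradiction argument from the structural result of Theorem~\ref{2.5 and 2.6}, which already covers the infinite case. Suppose, for contradiction, that $\Gamma$ is an infinite cubic $s$-arc-transitive graph of girth $6$ with $s \ge 3$. Since any $s$-arc-transitive graph is also $t$-arc-transitive for every $t \le s$, the graph $\Gamma$ is in particular $3$-arc-transitive. Thus the two hypotheses needed to invoke Theorem~\ref{2.5 and 2.6}, namely the existence of a consistent girth cycle and the property that every $3$-arc lies on a $6$-cycle, are exactly what I would verify next.

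First I would obtain the consistent cycle: because $\Gamma$ is $3$-arc-transitive of girth $6$, Lemma~\ref{consistent cycle} applies directly and guarantees a consistent $6$-cycle. Second, I would check that every $3$-arc is contained in a $6$-cycle. Since $\Gamma$ has girth $6$ it contains at least one $6$-cycle $C$, and $C$ contains $3$-arcs; so there is at least one $3$-arc $A$ lying on a $6$-cycle. Given any other $3$-arc $A'$, the $3$-arc-transitivity of $\Gamma$ supplies an automorphism $g$ with $g(A) = A'$, and then $g(C)$ is a $6$-cycle containing $A'$. Hence every $3$-arc of $\Gamma$ lies on a $6$-cycle.

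With both hypotheses confirmed, Theorem~\ref{2.5 and 2.6}, whose statement explicitly allows infinite graphs, forces $\Gamma$ to be the Heawood graph, the Pappus graph, or the Desargues graph. Each of these three graphs is finite (of order $14$, $18$, and $20$ respectively), contradicting the assumption that $\Gamma$ is infinite. This contradiction shows no such infinite graph exists.

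The argument is essentially immediate once the machinery of the preceding section is in place; the genuine work has already been done in Lemmas~\ref{more than 12 edges} and \ref{12 edges} (and hence in Theorem~\ref{2.5 and 2.6}), which handle the delicate case analysis on the number of edges of $\Delta$ uniformly for finite and infinite graphs. The only step requiring a moment of care is the verification that $3$-arc-transitivity together with girth $6$ propagates the \emph{one} $6$-cycle coming from the girth to \emph{every} $3$-arc; this is the point where transitivity is used in an essential way, but it presents no real obstacle.
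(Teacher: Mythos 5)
Your proof is correct and follows essentially the same route as the paper's: reduce to $s=3$, obtain the consistent $6$-cycle from Lemma~\ref{consistent cycle}, use $3$-arc-transitivity to transport a girth cycle onto every $3$-arc, and then invoke Theorem~\ref{2.5 and 2.6} to conclude all candidates are finite. If anything, your version is slightly tidier, since you verify the exact hypothesis of Theorem~\ref{2.5 and 2.6} (every $3$-arc on a $6$-cycle) rather than the weaker edge version stated in the paper's proof.
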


\begin{proof}
As an $s$-arc-transitive graph is $3$-arc-transitive, it suffices to show that there are no infinite cubic $3$-arc-transitive graphs of girth $6$.  If such a graph $\Gamma$ exists, it has a consistent cycle $C = v_0v_1v_2v_3v_4v_5v_0$ by Lemma \ref{consistent cycle}.  Let $xy\in E(\Gamma)$.  As $\Gamma$ is cubic, $xy$ is contained in some $3$-arc $A$ of $\Gamma$.  As $\Gamma$ is $3$-arc-transitive, there exists $\delta\in\Aut(\Gamma)$ such that $\delta(v_0v_1v_2v_3) = A$.  Then $xy$ is contained in the $6$-cycle $\delta(C)$.  We conclude that every edge of $\Gamma$ is contained in a girth cycle of $\Gamma$.  The result now follows by Theorem \ref{2.5 and 2.6}.
\end{proof}

\begin{corollary}\label{arc-transitive app}
Let $s\ge 3$.  The only connected cubic $s$-arc-transitive graphs of girth $6$ are the Heawood graph, the Desaurgues graph, and the Pappus graph. 
\end{corollary}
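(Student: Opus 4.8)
The plan is to read this off from the machinery already assembled, since the real content lies in Theorem \ref{2.5 and 2.6} and Lemma \ref{consistent cycle}. The argument splits into showing the three named graphs are the only candidates (the forward direction) and checking that each genuinely occurs (the converse).

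For the forward direction, let $\Gamma$ be a connected cubic $s$-arc-transitive graph of girth $6$ with $s \ge 3$. Since an $s$-arc-transitive graph is in particular $3$-arc-transitive, Lemma \ref{consistent cycle} provides a consistent $6$-cycle $C = v_0v_1v_2v_3v_4v_5v_0$. I would then verify the hypothesis of Theorem \ref{2.5 and 2.6} that every $3$-arc lies on a $6$-cycle: given any $3$-arc $A$, $3$-arc-transitivity yields $\delta \in \Aut(\Gamma)$ with $\delta(v_0v_1v_2v_3) = A$, and then $\delta(C)$ is a $6$-cycle containing $A$. This is exactly the observation used in the proof of Corollary \ref{final result}, where finiteness played no role in reaching the structural conclusion. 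Theorem \ref{2.5 and 2.6} now forces $\Gamma$ to be the Heawood, Pappus, or Desargues graph.

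For the converse, I would recall the well-known arc-transitivity data for these three graphs: each is a connected cubic graph of girth $6$, the Heawood graph is $4$-arc-regular, while the Pappus and Desargues graphs are $3$-arc-regular. Hence all three are $s$-arc-transitive for some $s \ge 3$ and belong on the list. If one wishes to be precise about a fixed value of $s$: for $s = 3$ all three occur, for $s = 4$ only the Heawood graph, and for $s \ge 5$ none, in agreement with the standard bound that cubic arc-transitive graphs are at most $5$-arc-transitive.

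The step carrying all the weight is the appeal to Theorem \ref{2.5 and 2.6}, so there is no genuine obstacle remaining at this stage. The only points requiring care are to confirm the ``every $3$-arc lies on a $6$-cycle'' hypothesis, which is immediate from $3$-arc-transitivity, and to cite correctly the arc-regularity of the three sporadic examples so that the converse inclusion is justified.
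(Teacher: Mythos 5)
Your proposal is correct and follows exactly the route the paper intends: the paper states this corollary without a separate proof because it is immediate from Lemma \ref{consistent cycle}, the observation (already made in the proof of Corollary \ref{final result}) that $3$-arc-transitivity places every $3$-arc on a $6$-cycle, and Theorem \ref{2.5 and 2.6}, together with the standard arc-regularity of the three named graphs. Nothing in your argument deviates from or adds to what the paper relies on.
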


We now give applications of the above results for the distinguishing cost for cubic arc-transitive graphs of girth $6$. Our first result improves \cite[Lemma 6.8]{ImrichLTW2022}.

\begin{corollary}\label{distinguishing application}
Let $\Gamma$ be a connected cubic arc-transitive graph of girth $6$.  Then $\Gamma$ is at most $4$-arc-regular.  Also,
\begin{enumerate}
\item if $\Gamma$ is $1$-arc-regular, then $\rho(\Gamma) = 2$,
\item if $\Gamma$ is $2$-arc-regular, then $\rho(\Gamma)\le 3$,
\item if $\Gamma$ is $3$-arc-regular, then $\Gamma$ is the Desaurges graph or the Pappus graph, and $\rho(\Gamma) = 3$,
\item if $\Gamma$ is $4$-arc-regular, then $\Gamma$ is the Heawood graph and $\rho(\Gamma) = 5$.
\end{enumerate}
\end{corollary}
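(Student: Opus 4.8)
The plan is to split the proof according to the value of $s$ for which $\Gamma$ is $s$-arc-regular, handling the two ``local'' cases $s\le 2$ by elementary rigidity arguments that apply verbatim to finite and infinite graphs, and the two ``global'' cases $s\ge 3$ by invoking the classification in Corollary~\ref{arc-transitive app}. First I would record the known fact that a connected cubic arc-transitive graph is $s$-regular for some $s\le 5$ or is the infinite trivalent tree; the tree has infinite girth, so it is excluded and $s\le 5$. If $s=5$, then $\Gamma$ is in particular $3$-arc-transitive, so by Corollary~\ref{arc-transitive app} it is the Heawood, Pappus, or Desargues graph; but these are $4$-, $3$-, and $3$-arc-regular respectively, none being $5$-arc-regular, a contradiction. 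Hence $\Gamma$ is at most $4$-arc-regular. I would also note once and for all that $\rho(\Gamma)\ge 2$, since arc-transitivity forces $|G_v|\ge 3$, so no single vertex is a distinguishing set.

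For $s=1$ I would exhibit a distinguishing set of size $2$. Pick a vertex $c$ with two neighbours $a,b$ and set $S=\{a,b\}$. As $\Gamma$ has girth $6$, the vertices $a,b$ have $c$ as their \emph{unique} common neighbour, so any $g\in G$ fixing $S$ setwise fixes $c$. Since $\Gamma$ is $1$-arc-regular, $G_c\cong \ZZ_3$ acts regularly (as a $3$-cycle) on $N(c)$, and such an action has no element swapping two neighbours while fixing the third; thus $g$ cannot interchange $a$ and $b$, so it fixes $a$ and $c$, i.e.\ the arc $(c,a)$, forcing $g=1$ by $1$-arc-regularity. Hence $\rho(\Gamma)=2$.

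For $s=2$ I would enlarge this to a $3$-set. Keeping $c$ and $a_1,a_2\in N(c)$, choose $d\in N(a_1)\setminus\{c\}$ and put $S=\{a_1,a_2,d\}$. A short girth-$6$ computation shows the three pairwise distances are $1$ (for $a_1,d$), $2$ (for $a_1,a_2$), and $3$ (for $a_2,d$), all distinct, so any $g\in G_S$ fixes each of $a_1,a_2,d$ individually; then $g$ fixes the unique common neighbour $c$ of $a_1,a_2$, hence fixes the $2$-arc $d\,a_1\,c$ pointwise, forcing $g=1$ by $2$-arc-regularity. This gives $\rho(\Gamma)\le 3$. Both arguments are purely local, so they cover the possibly infinite graphs that may occur for $s\le 2$.

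Finally, for $s\ge 3$ the graph is $3$-arc-transitive, so by Corollary~\ref{arc-transitive app} it is one of the three named finite graphs, and matching the arc-regularity pins it down: Heawood for $s=4$, and Pappus or Desargues for $s=3$. The remaining task is the exact cost of each, namely $\rho(\text{Pappus})=\rho(\text{Desargues})=3$ and $\rho(\text{Heawood})=5$, which I would take from the computations for cubic arc-transitive graphs in \cite{ImrichLTW2022} (or verify directly). I expect this last step to be the main obstacle: unlike the $s\le 2$ cases, the larger vertex-stabilizers (of orders $12$ and $24$, whose kernels fix a vertex together with all of its neighbours) mean the cost is no longer governed by a single local configuration, so establishing the matching lower bounds $\rho\ge 3$ and $\rho\ge 5$ genuinely requires the global automorphism structure of these specific graphs.
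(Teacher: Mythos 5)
Your proposal is correct and follows the same skeleton as the paper's proof: the bound $s\le 4$ and the identification of the graphs in parts (3) and (4) come from Corollary~\ref{arc-transitive app}, and the exact costs of the Heawood, Pappus, and Desargues graphs are left to computation (the paper verifies them by Magma; you defer to \cite{ImrichLTW2022} or direct verification, which amounts to the same thing). The one substantive difference is in parts (1) and (2): the paper simply cites \cite[Lemma 6.8]{ImrichLTW2022}, whereas you give self-contained local arguments --- for $s=1$ the two neighbours $a,b$ of a vertex $c$ have $c$ as unique common neighbour by girth $>4$, the image of $G_c$ in $\Sym(N(c))$ is a $3$-cycle and so cannot preserve $\{a,b\}$ nontrivially, and $1$-regularity finishes; for $s=2$ the pairwise distances $1,2,3$ in your $3$-set force it to be fixed pointwise and $2$-regularity applies to the resulting $2$-arc. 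These arguments are sound (I checked the girth computations: a common second neighbour of $a_2$ and $d$ would create a $4$- or $5$-cycle), they apply equally to infinite graphs, and they make the corollary independent of the cited lemma for those two cases, at the cost of a somewhat longer write-up. Your candid remark that the lower bounds $\rho\ge 3$ and $\rho\ge 5$ for the three exceptional graphs require global information is exactly why both you and the authors fall back on computation there.
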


\begin{proof}
All of the information in the result for $s = 1$ and $2$ and comes from \cite[Lemma 6.8]{ImrichLTW2022}.  Except for $\rho(\Gamma)$ in (3) and (4), the result follows by Corollary \ref{arc-transitive app}.  The distinguishing cost of the Desaurgue, Pappus, and Heawood graphs are easily verified by Magma.
\end{proof}

The next result lowers the upper bound on the distinguishing cost given in \cite[Theorem 6.1]{ImrichLTW2022} from $5$ to $4$ by additionally excluding the Heawood graph.

\begin{corollary}
Let $\Gamma$ be a cubic arc-transitive graph that is not $K_4$, $K_{3,3}$, the cube, the Petersen graph, or the Heawood graph.  If $\Gamma$ has finite girth, then $\rho(\Gamma)\le 4$, and otherwise it is the infinite cubic tree $T_3$, which has finite distinguishing cost and distinguishing density $0$ (see \cite[Section 3]{ImrichLTW2022} for the definition of distinguishing density).
\end{corollary}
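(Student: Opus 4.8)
The plan is to separate the two regimes of the statement by girth. If $\Gamma$ does not have finite girth, then it contains no cycle and, being connected and cubic, it is forced to be the infinite trivalent tree $T_3$; its distinguishing density and cost are then exactly what is recorded in \cite[Section 3]{ImrichLTW2022}, so this branch requires no new argument beyond the observation that a connected acyclic cubic graph must be $T_3$.

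All of the work lies in the finite-girth case, where the goal is to sharpen the bound $\rho(\Gamma)\le 5$ of \cite[Theorem 6.1]{ImrichLTW2022} to $\rho(\Gamma)\le 4$ once the Heawood graph is removed. First I would revisit the proof of \cite[Theorem 6.1]{ImrichLTW2022} and locate the single place where it produces $5$ rather than $4$ colored vertices: that value is inherited from \cite[Lemma 6.8]{ImrichLTW2022}, which treats the girth-$6$ graphs and is precisely the lemma that Corollary~\ref{distinguishing application} improves. In other words, I would argue that for every finite-girth cubic arc-transitive graph of girth $\neq 6$ (outside the four graphs that are not $2$-distinguishable) the construction of \cite[Theorem 6.1]{ImrichLTW2022} already yields a distinguishing set of size at most $4$, so that the value $5$ enters solely through the girth-$6$ case.

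It then remains to re-examine the girth-$6$ graphs with the stronger information now available. By Corollary~\ref{distinguishing application} a cubic arc-transitive graph of girth $6$ is at most $4$-arc-regular, with cost $2$, at most $3$, or $3$ according as it is $1$-, $2$-, or $3$-arc-regular, the $3$-arc-regular cases being the Pappus and Desargues graphs by Corollary~\ref{arc-transitive app}; the only graph of cost $5$ is the $4$-arc-regular Heawood graph. Hence the Heawood graph is the unique finite-girth cubic arc-transitive graph, apart from $K_4$, $K_{3,3}$, the cube and the Petersen graph, whose distinguishing cost equals $5$, and excluding it lowers the global upper bound to $4$, which is the assertion.

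The step I expect to be the real obstacle is the localization claim of the second paragraph: one must be certain that, across all arc-regularities $1\le s\le 5$ permitted by Tutte's theorem, the only mechanism in \cite[Theorem 6.1]{ImrichLTW2022} that forces a fifth colored vertex is the girth-$6$ configuration handled by \cite[Lemma 6.8]{ImrichLTW2022}. If the cited proof does not make this transparent, the fallback is to verify directly that graphs of girth $\ge 7$ (in particular the $5$-arc-regular Tutte--Coxeter graph) admit a distinguishing set of size $4$, by rerunning the path-coloring construction under the stronger girth hypothesis, which prevents a colored short path from recurring elsewhere in the graph.
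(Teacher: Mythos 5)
Your proposal is correct and follows essentially the same route as the paper: split by girth, let the girth-$6$ case be settled by Corollary~\ref{distinguishing application} (only the Heawood graph, now excluded, has cost $5$ there), and defer everything else to the earlier paper. The ``localization'' step you flag as the potential obstacle is exactly how the paper handles it, simply by citing \cite[Lemma 6.7]{ImrichLTW2022} for girth $\ge 7$ (which already gives $\rho(\Gamma)\le 4$) and \cite[Theorem 6.2]{ImrichLTW2022} for girth $\le 5$ (all such graphs are among the excluded ones), so no re-examination of the proof of \cite[Theorem 6.1]{ImrichLTW2022} is needed.
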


\begin{proof}
If $\Gamma$ does not have finite girth, then the result follows by \cite[Theorem 6.1]{ImrichLTW2022}.  If $\Gamma$ has finite girth $g$, then if $g\ge 7$ we have $\rho(\Gamma)\le 4$ by \cite[Lemma 6.7]{ImrichLTW2022}.  If $g\le 5$, then by \cite[Theorem 6.2]{ImrichLTW2022} all such graphs have been excluded.  If $g = 6$, then by Corollary \ref{distinguishing application} the only such graph with $\rho(\Gamma) > 4$ is the Heawood graph, which has also been excluded.
\end{proof}

\noindent Acknowledgment.  
The authors are indebted to an anonymous referee for many suggestions which improved the clarity of the paper. The work of Ted Dobson is supported by the Slovenian Research and Innovation Agency grants: P1-0285, J1-50000, J1-4008, and J1-3003. The work of Ademir Hujdurovi\' c is supported by the Slovenian Research and Innovation Agency grants P1-0404,
N1-0428,
J1-50000,
J1-4084, 
N1-0353,
N1-0391, and
J1-60012.
This work was supported by the OEAD: SI 1312020.

\bibliography{References}{}
\bibliographystyle{plain}

 \end{document}